\def\section{\@startsection{section}{1}%
  \z@{.7\linespacing\@plus\linespacing}{.5\linespacing}%
  {\normalfont\bfseries\centering}}
\def\@secnumfont{\bfseries}
\renewcommand{\o}{\circ}
\def\frak{\mathfrak}
\def\Bbb{\mathbb}
\def\Cal{\mathcal}
\newcommand{\al}{\alpha}
\newcommand{\la}{\lambda}
\newcommand{\om}{\omega}
\newcommand{\si}{\sigma}
\renewcommand{\th}{\theta}
\newcommand{\ze}{\zeta}
\newcommand{\Ga}{\Gamma}
\newcommand{\Rho}{{\mbox{\sf P}}}
\newcommand{\Om}{\Omega}
\newcommand{\ad}{\operatorname{ad}}
\newcommand{\Ad}{\operatorname{Ad}}
\renewcommand{\exp}{\operatorname{exp}}
\newcommand{\tr}{\operatorname{tr}}
\renewcommand{\o}{\circ}
\newcommand{\pmat}[1]{\begin{pmatrix}#1\end{pmatrix}}
\let\x=\times
\def\g{\frak g}
\def\p{\frak p}
\def\X{\frak X}
\def\({\big(}
\def\){\big)}
\def\R{\Bbb R}
\def\C{\Bbb C}
\newcommand{\tg}{{\tilde{\frak g}}}
\newcommand{\tp}{{\tilde{\frak p}}}
\def\L{{\Cal L}}
\def\G{{\Cal G}}
\def\Ce{{\Cal C}}
\def\.{\hbox to5pt{\hss$\cdot$\hss}}
\def\span#1{\langle#1\rangle}
\renewcommand{\P}{\Bbb P}
\newcommand{\Proj}{\operatorname{\Cal P}}
\def\CE{\hat\Ce}
\def\Nabla{\hat\nabla}
\def\OM{\hat\Omega}
\def\Al{\hat\alpha}
\def\E{\hat E}
\def\h{\hat h}
\def\ins{\operatorname{\lrcorner}}
\def\bez#1{\setminus\{#1\}}
\def\ddt#1{\frac{d}{dt}\ifx#1\nic\else\big\vert_{#1}\fi}
\newtheorem*{prop*}{Proposition}
\newtheorem*{thm*}{Theorem}
\newtheorem*{lem*}{Lemma}
\newtheorem*{cor*}{Corollary}
\begin{document}

\title{Remarks on Special Symplectic Connections}
\author{Martin Pan\'ak and Vojt\v ech \v Z\'adn\'\i k}

\thanks{The first author was supported by the grant nr. 201/05/P088, the second author 
by the grant nr. 201/06/P379, both grants of the Grant Agency of the Czech Republic.}
\address{Masaryk University, Brno, Czech republic}
\email{naca@math.muni.cz, zadnik@math.muni.cz}
\subjclass[2000]{53D15, 53C15, 53B15}
\keywords{Special symplectic connections, parabolic contact geometries, Weyl
structures and connections}

\begin{abstract}
The notion of special symplectic connections is closely related to 
parabolic contact geometries due to the work of M. Cahen and L. Schwachh\"ofer.
We remind their characterization and reinterpret the result in terms of generalized Weyl connections.
The aim of this paper is to provide an alternative and more explicit construction 
of special symplectic connections of three types from the list.
This is done by pulling back an ambient linear connection from the total space of a 
natural scale bundle over the homogeneous model of the corresponding parabolic contact 
structure.
\end{abstract}

\maketitle


\section{Introduction}			\label{1}
Special symplectic connection on a symplectic manifold $(M,\om)$ is a 
torsion--free linear connection preserving $\om$ which is special in the sense 
of definitions in \ref{2.1}.
The definition of special symplectic connection is rather wide,
however, there is a nice link between special symplectic connections and 
parabolic contact geometries, which was established in the profound 
paper \cite{CS}.
The main result of that paper states that, locally, any special symplectic 
connection on $M$ comes via a symplectic reduction
from a specific linear connection on a one--dimension bigger contact 
manifold $\Ce$, the homogeneous model of some parabolic contact geometry.
All the necessary background on parabolic contact geometries is collected
in section \ref{2}.
The construction and the characterization from \cite{CS} is quickly reminded
in section \ref{3}, culminating in Theorem \ref{th1}.

In the next section we provide an alternative and rather direct approach to
special symplectic connections.
Firstly we reinterpret the previous results in terms of
parabolic geometries so that the specific linear connections on
$\Ce$ are exactly the exact Weyl connections corresponding to specific
choices of scales.
A~choice of scale further defines a bundle projection from $T\Ce$ to the 
contact distribution $D\subset T\Ce$ and this gives rise to a partial 
contact connection on $D$.
By the very construction, the only ingredient which yields the special
symplectic connection on $M$ is just the partial contact connection
associated to the choice of scale, Proposition \ref{th2}.
  
Finally, the direct construction of special symplectic connections works via
a pull--back of an ambient symplectic connection on the total space of a 
canonical scale bundle $\CE\to\Ce$. 
Namely for several specified cases we can find a convenient ambient
connection on $\CE$ and then compare the exact Weyl 
connection and the pull--back connection on $\Ce$ corresponding to the 
choice of scale so that they coincide on the contact distribution $D$, 
Theorem \ref{th3}.
By the previous results, they give rise the same symplectic connection on $M$ 
after the reduction.
This construction applies to the projective contact structures, CR structures 
of hypersurface type, and Lagrangean contact structures, which are dealt
in subsections \ref{4.1}, \ref{4.2}, and \ref{4.3}, respectively.

\subsection{Special symplectic connections}	\label{2.1}
Given a smooth manifold $M$ with a symplectic structure $\om\in\Om^2(M)$,
linear connection $\nabla$ on $M$ is said to be \textit{symplectic} if it is torsion
free and $\om$ is parallel with respect to $\nabla$. 
There is a lot of symplectic connections to a given symplectic structure,
hence studying this subject, further restrictive  conditions appear.
Following the article \cite{CS}, we consider the \textit{special symplectic
connections} defined as symplectic connections belonging to some of the
following classes:

(i) \textit{Connections of Ricci type}. The curvature tensor of a symplectic connection 
decomposes under the action of the symplectic group into two irreducible components. 
One of them corresponds to the Ricci curvature and the other one is the 
Ricci--flat part.  If the curvature tensor consists only
of the Ricci curvature part, then the connection is said to be of Ricci type.

(ii) \textit{Bochner--K\"ahler connections}. Let the symplectic form be the
K\"ahler form of a (pseudo--)K\"ahler metric and let the connection preserve this
(pseudo--)K\"ahler structure. The curvature tensor decomposes similarly as in the previous case into two parts
but this time under the action of the (pseudo--)unitary group. These are called  Ricci curvature and Bochner curvature.
If the Bochner curvature vanishes, the connection is  called Bochner--K\"ahler.

(iii) \textit{Bochner--bi--Lagrangean connections}. 
A~bi--Lagrangean structure on a symplectic manifold consists of two complementary 
Lagrangean distributions. If a symplectic connection preserves such structure,
i.e. both the Lagrangean distributions are parallel, then again the curvature tensor
decomposes into the Ricci and Bochner part. If the Bochner curvature
vanishes, we speak about Bochner--bi--Lagrangean connections.

(iv) \textit{Connections with special symplectic holonomies}.
We say that a symplectic connection has special symplectic holonomy if its holonomy is
contained in a proper absolutely irreducible subgroup of the symplectic group.
Special symplectic holonomies are completely classified and studied by various
people.

\smallskip
Connections of Ricci type are characterized in the interesting article \cite{CGS}, 
see remark \ref{4.1}(a) for some detail.
The Bochner--K\"ahler metrics (marginally also the Bochner--bi--Lagrangean
structures) have been thoroughly studied in the deep article \cite{B}.
See also \cite{P-S} for further investigation of the subject which
is more relevant to our recent interests.
For more remarks and references on special symplectic connections we generally 
refer to \cite{CS}.

Note that all the previous definitions admit an analogy in complex/holomorphic
setting but we are dealing only with the real structures in this paper.

\subsection*{Acknowledgements} 
We would like to thank in the first place to Andreas \v Cap  for the 
fruitful discussions and suggestions concerning mostly the Weyl structures,
especially in the technical part of \ref{4.1}. 
Among others we would like to mention Lorenz Schwachh\"ofer, Jan Slov\' ak 
and Ji\v r\'\i\ Van\v zura, who were willing to discuss some aspects of the 
geometries in this article.

\section{Parabolic contact geometries and Weyl connections} \label{2} 

In this section we provide the necessary background from parabolic geometries
and generalized Weyl structures as can be found in \cite{Slo}, \cite{C-S} or,
the most comprehensively, in \cite{parabook}.

\subsection{Parabolic contact geometries}	\label{2.2}
Semisimple Lie algebra admits a \textit{contact grading} if there is a grading 
$\g=\g_{-2}\oplus\g_{-1}\oplus\g_0\oplus\g_1\oplus\g_2$ such
that $\g_{-2}$ is one dimensional and the Lie bracket $[\ ,\ ]:\g_{-1}\x\g_{-1}
\to\g_{-2}$ is non--degenerate.
If $\g$ admits a contact grading, then $\g$ has to be simple.
Any complex simple Lie algebra, except $\frak{sl}(2,\C)$, admits a unique 
contact grading, but this is not guaranteed generally in real case.
However, the split real form of complex simple Lie algebra and most of 
non--compact non--complex real Lie algebras admit a contact grading.

Let $\g$ be a real simple Lie algebra admitting a contact grading, let
$\p:=\g_0\oplus\g_1\oplus\g_2$ be the corresponding parabolic subalgebra,
and let $\p_+:=\g_1\oplus\g_2$.
Let further $\frak{z}(\g_0)$ be the center of $\g_0$.
Let $E\in\frak z(\g_0)$ be the grading element of $\g$ and let
$\g_0'\subset\g_0$ be the orthogonal complement of $E$ with respect to the
Killing form on $\g$.
From the invariance of the Killing form and the fact that
$[\g_{-2},\g_2]=\span{E}$, the subalgebra $\g_0'\subset\g_0$ is
equivalently characterized by the fact that $[\g_0',\g_2]=0$.
For later use let us denote $\p':=\g_0'\oplus\p_+$.%
\footnote{Note that in \cite{CS} the essential subalgebras $\g_0'$ and $\p'$ are 
denoted by $\frak{h}$ and $\p_0$, respectively.}

For a semisimple Lie group $G$ and a parabolic subgroup
$P\subset G$, \textit{parabolic geometry} of type $(G,P)$ on a smooth manifold 
$M$ consists of a principal $P$--bundle $\G\to M$ and a Cartan connection 
$\eta\in\Om^1(\G,\g)$, where $\g$ is the Lie algebra of $G$.
If $\g$ is simple Lie algebra admitting a contact grading and the Lie 
subalgebra $\p\subset\g$ of $P$ corresponds to this grading, then we 
speak about \textit{parabolic contact geometry}.
The contact grading of $\g$ gives rise to a contact structure on $M$ as
follows.
Under the usual identification $TM\cong\G\x_P\g/\p$ via $\eta$, 
the $P$--invariant subspace $(\g_{-1}\oplus\p)/\p\subset\g/\p$, 
defines a distribution $D\subset TM$, namely
\begin{equation}			\label{eqD}
  D\cong \G\x_P(\g_{-1}\oplus\p)/\p.
\end{equation}
For regular parabolic geometries of these types, the distribution $D\subset TM$
defined by \eqref{eqD} is a contact distribution.
The Lie bracket of vector fields induces the so--called \textit{Levi bracket} 
on the associated graded bundle $\operatorname{gr}(TM)=D\oplus TM/D$, which 
is an algebraic bracket of the form $\L:D\wedge D\to TM/D$.
The regularity means the Levi bracket corresponds to the Lie bracket on 
$\g_-=\g_{-1}\oplus\g_{-2}$.

Any contact distribution can be always given as the kernel of a \textit{contact form}
$\th\in\Om^1(M)$, i.e. a one--form such that $\th\wedge (d\th)^n$ is a volume
form on $M$. 
In particular, the restriction of $d\th$ to $D\wedge D$ is non--degenerate.
For any choice of contact form $\th$, let $r_\th\in\X(M)$ be the corresponding
\textit{Reeb vector field}, i.e. the unique vector field on $M$ satisfying
$r_\th\ins d\th=0$ and $\th(r_\th)=1$.
This further provides a trivialization of the quotient bundle $TM/D$ so that 
$TM\cong D\oplus\R$. 
Next, if $X$ and $Y$ are sections of $D=\ker\th$ then $d\th(X,Y)=-\th([X,Y])$
by the definition of exterior differential.
Altogether, under the trivialization above, the restriction of $d\th$ to 
$D\wedge D$ coincides with the Levi bracket $\L$ up to the sign.

\subsection{Weyl structures}
Let $(\G\to M,\eta)$ be a parabolic geometry of type $(G,P)$.
Let $\p\subset\g$ be the Lie algebras of the Lie groups $P\subset G$ and let
$\g=\g_{-k}\oplus\dots\oplus\g_0\oplus\dots\oplus\g_k$ be the corresponding
grading of $\g$.
Let $G_0\subset P$ be the Lie group with Lie algebra $\g_0$ and let
$P_+:=\exp\p_+$ so that $P=G_0\rtimes P_+$.
Let further $\G_0:=\G/P_+\to M$ be the underlying $G_0$--bundle and let 
$\pi_0:\G\to\G_0$ be the canonical projection. 
The filtration of the Lie algebra $\g$ gives rise to a filtration of $TM$ and 
the principal $G_0$--bundle $\G_0\to M$ plays the role of the frame bundle of 
the associated graded $\operatorname{gr}(TM)$. 
The reduction of the structure group of $TM$ to $G_0$ often corresponds to an
additional geometric structure on $M$ and this collection of data we 
call the \textit{underlying structure} on $M$ 
(see e.g.\ \cite{C-S} for more precise formulations).

A~{\it Weyl structure} for the parabolic geometry $(\G\to M,\eta)$ is a global 
smooth $G_0$--equivariant section $\sigma:\G_0\to\G$ of the projection $\pi_0$.
In particular, any Weyl structure provides a reduction of the $P$--principal bundle 
$\G\to M$ to the subgroup $G_0\subset P$.
Denote by $\eta_i$ the $\g_i$--component of the Cartan connection
$\eta\in\Om^1(\G,\g)$. 
For a Weyl structure $\si:\G_0\to\G$, the pull--back $\sigma^*\eta_0$ defines 
a principal connection on the principal bundle $\G_0$;
this is called the {\it Weyl connection} of the Weyl structure $\sigma$.
Next, the form $\si^*\eta_-\in\Om^1(\G_0,\g_-)$ provides an identification of 
the tangent bundle $TM$ with the associated graded tangent bundle 
$\operatorname{gr}(TM)$
and the form $\si^*\eta_+\in\Om^1(\G_0,\p_+)$ is called the \textit{Rho--tensor}, 
denoted by $\Rho^\si$.
The Rho--tensor is used to compare the Cartan connection $\eta$ on $\G$ and the
principal connection on $\G$ extending the Weyl connection $\si^*\eta_0$ from
the image of $\si:\G_0\to\G$.

Any Weyl connection induces connections on all bundles associated to $\G_0$,
in particular, there is an induced linear connection on $TM$.
By definition, any Weyl connection preserves the underlying structure on $M$.  
On the other hand, 
there are particularly convenient bundles such that the induced
connection from $\si^*\eta_0$ is sufficient to determine whole the Weyl
structure $\si$.
These are the so--called bundles of scales, the oriented line bundles over $M$
defined as follows.

\subsection{Scales and exact Weyl connections}		\label{3.2}
Let $\L\to M$ be a principal $\R_+$--bundle associated to $\G_0$. 
This is determined by a group homomorphism $\la:G_0\to\R_+$ whose derivative
is denoted by  $\la':\g_0\to\R$.
The Lie algebra $\g_0$ is reductive, i.e. $\g_0$ splits into a direct sum of the
center $\frak{z}(\g_0)$ and the semisimple part, hence the only elements that
can act non--trivially by $\la'$ are from $\frak{z}(\g_0)$.
Next, the restriction of the Killing form $B$ to $\g_0$ and further to
$\frak{z}(\g_0)$ is non--degenerate.
Altogether, for any representation $\la':\g_0\to\R$ there is a unique element
$E_\la\in\frak{z}(\g_0)$ such that 
\begin{equation}			\label{eqla}
  \la'(A)=B(E_\la,A)
\end{equation}
for all $A\in\g_0$.
By Schur's lemma, $E_\la$ acts by a real scalar on any irreducible 
representation of $G_0$.
An element $E_\la\in\frak{z}(\g_0)$ is called a {\it scaling element} if 
it acts by a non--zero real scalar on each $G_0$--irreducible component of 
$\p_{+}$.
(In general, the grading element of $\g$ is a scaling element.)
A~{\it bundle of scales} is a principal $\R_+$--bundle associated 
to $\G_0$ via a homomorphism $\la:G_0\to \R_{+}$, whose derivative 
is given by \eqref{eqla} for some scaling element $E_{\la}$.
Bundle of scales $\L^\la\to M$ corresponding to $\la$ is naturally identified 
with $\G_0/\ker\la$, the orbit space of the action of the normal subgroup
$\ker\la\subset G_0$ on $\G_0$.

Let $\L^\la\to M$ be a fixed bundle of scales and let $\si:\G_0\to\G$ be a Weyl
structure of a parabolic geometry $(\G\to M,\eta)$.
Then the Weyl connection $\si^*\eta_0$ on $\G_0$ induces a principal
connection on $\L^\la$ and \cite[Theorem 3.12]{C-S} shows that this
mapping establishes a bijective correspondence between the set of Weyl 
structures and the set of principal connections on $\L^\la$.
Note that the surjectivity part of the statement is rather implicit, however
there is a distinguished subclass of Weyl structures which allow more
satisfactory interpretation, namely the \textit{exact Weyl strucures} defined 
as follows.
Any bundle of scales is trivial and so it admits  global smooth sections,
which we usually refer to as \textit{choices of scale}.
Any choice of scale gives rise to a flat principal connection on $\L^\la$ and 
the corresponding Weyl structure is then called {\it exact}.

Furthermore, due to the identification $\L^\la=\G_0/\ker\la$, 
the sections of $\L^\la\to M$ are in a bijective correspondence with 
reductions of the principal bundle $\G_0\to M$ to the structure group 
$\ker\la\subset G_0$.
Altogether for any choice of scale, the composition of the two reductions
above is a reduction of the principal $P$--bundle $\G\to M$ to the structure 
group $\ker\la\subset G_0\subset P$; let us denote the resulting bundle by $\G_0'$.
Hence the corresponding exact Weyl connection has holonomy in $\ker\la$
and by general principles from the theory of $G$--structures, it preserves the 
geometric quantity corresponding to the choice of scale.

In the cases of parabolic contact geometries, the canonical candidate for the 
bundle of scales is the bundle of positive contact one--forms.
Note that this is the bundle of scales corresponding to (a non--zero multiple of) 
the grading element $E\in\frak{z}(\g_0)$, hence the Lie subalgebra
$\ker\la'\subset\g_0$ is identified with $\g_0'$ from \ref{2.2}.
Let $G_0'$ be the connected subgroup in $G$ corresponding to $\g_0'\subset\g$.
Reinterpreting the general principles above:
the choice of a contact one--form $\th\in\Om^1(M)$ yields a reduction
$\G_0'\subset\G$ of the principal bundle $\G\to M$ to the subgroup $G_0'\subset P$ 
and a principal connection on $\G_0'$, which 
preserves not only the underlying structure on $M$
(so in particular the contact distribution $D=\ker\th$), but moreover 
the form $\th$ itself.
In other words, $\th$ is parallel with respect to the induced linear
connection on $TM$.

\section{Characterization of special symplectic connections}	\label{3}

In this section the quick review of the construction of the special symplectic connections from the article
\cite{CS} is described.
Consult e.g. \cite{Chu} for details on invariant symplectic structures on
homogeneous spaces.

\subsection{Adjoint orbit and its projectivization}	\label{2.3}
Let $\g$ be a real simple Lie algebra admitting  a contact grading and 
let $e^2_+\in\g$ be a maximal root element, i.e.  a generator of $\g_2$.
Let $G$ be a connected Lie group with Lie algebra $\g$. 
Consider the adjoint orbit of $e_+^2$ and its oriented projectivization:
\begin{equation}			\label{eqcone}
\CE:=\Ad_G(e^2_+)\subset\g, \ 
\Ce:=\Proj^o(\CE)\subset\Proj^o(\g).
\end{equation}
The restriction of the natural projection $p:\g\bez0\to\Proj^o(\g)$ to $\CE$
yields the principal $\R_+$--bundle $p:\CE\to\Ce$, which we call 
\textit{the cone}.
The right action of $\R_+$  is just the multiplication by positive real 
scalars.
The fundamental vector field of this action is the
\textit{Euler vector field} $\E$ defined as $\E(x):=x$, 
for any $x\in\CE\subset\g$. 

Since $\CE$ is an adjoint orbit of $G$ in
$\g$, and $\g$ can be identified with $\g^*$ via the Killing form,
there is a canonical $G$--invariant symplectic form $\OM$ on $\CE$.
For any $X,Y\in\g$ and $\al\in\CE\subset\g^*$, the value of $\OM$ is given 
by the formula 
\begin{equation*}
\OM(\ad^*_X(\al),\ad^*_Y(\al)):=\al([X,Y]),
\end{equation*}
where $\ad^*:\g\to\frak{gl}(\g^*)$ is the infinitesimal coadjoint 
representation and $\ad^*_X(\al)=-\al\o\ad_X$ is viewed as an
element of $T_\al\CE$. Under the identification $\g\cong\g^*$ the previous 
formula reads as
\begin{equation}			\label{eqOm}
\OM(\ad_X(a),\ad_Y(a))=B(a,[X,Y]),
\end{equation}
for any $X,Y\in\g$ and $a\in\CE\subset\g$, where $B:\g\x\g\to\R$ is the
Killing form.
The Euler vector field and the canonical symplectic form defines a (canonical)
$G$--invariant one--form $\Al$ on $\CE$ by 
\begin{equation}			\label{eqAl}
\Al:=\frac12\E\ins\OM.
\end{equation}
Immediately from definitions it follows that $\L_{\E}\OM=2\OM$ and
consequently $d\Al=\OM$.

\begin{lem*}
Let $p:\CE\to\Ce$ be the cone defined by \eqref{eqcone} and let
$P'\subset P$ be the connected subgroups in $G$ corresponding to the
subalgebras $\p'\subset\p\subset\g$ from \ref{2.2}.
Then $\CE\cong G/P'$ and $\Ce\cong G/P$ so that the contact distribution 
$D\subset T(G/P)$ is identified with $Tp\.\ker\Al\subset T\Ce$.
\end{lem*}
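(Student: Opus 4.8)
The statement bundles together three claims: the orbit identifications $\CE\cong G/P'$ and $\Ce\cong G/P$, and the identification of the contact distribution $D$ with $Tp\cdot\ker\Al$. I would establish them in that order, since the last rests on the first two together with the explicit formula \eqref{eqOm} for $\OM$.

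First I would compute the isotropy subgroups. The point $e^2_+\in\g_2\setminus\{0\}$ is a maximal root vector; its stabilizer under $\Ad_G$ is the set of $g\in G$ with $\Ad_g(e^2_+)=e^2_+$. Differentiating, the stabilizer subalgebra is $\{X\in\g:[X,e^2_+]=0\}$. One checks directly from the contact grading that $[\g_i,\g_2]=0$ exactly for $i\ge 0$ when restricted to the part that kills $e^2_+$: the grading forces $[\g_{\le -1},e^2_+]\ne 0$ while $[\p_+,\g_2]=0$ and, by the characterization in \ref{2.2}, $[\g_0',\g_2]=0$ whereas the grading element $E$ acts on $e^2_+$ by the nonzero scalar $2$. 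Hence the stabilizer subalgebra of $e^2_+$ is precisely $\p'=\g_0'\oplus\p_+$, giving $\CE=\Ad_G(e^2_+)\cong G/P'$ once one knows the stabilizer subgroup is connected — which holds because $P'$ is defined to be connected and $G/P'\to\CE$ is then a covering of a simply-connected-fibre homogeneous space, or more simply because the orbit map $G\to\CE$ factors through $G/P'$ and is a bijection on the level of the identity component, the general fact that stabilizers of orbits of connected groups acting on vector spaces need a small argument here. For the projectivization, $\Proj^o$ collapses the $\R_+$-action generated by $\E$; the stabilizer of the ray $\R_+\cdot e^2_+$ then additionally contains the grading element $E$ (which rescales $e^2_+$ positively), so the stabilizer subalgebra becomes $\g_0'\oplus\span{E}\oplus\p_+=\g_0\oplus\p_+=\p$, yielding $\Ce\cong G/P$.

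Next I would pin down the contact distribution. Under $TM\cong\G\times_P\g/\p$ with $\G=G$ here, the distribution $D$ is $G\times_P(\g_{-1}\oplus\p)/\p$ by \eqref{eqD}, i.e. at the base point $o=eP$ it is the image of $\g_{-1}$ in $\g/\p\cong T_o(G/P)$. On the cone side, $T_{e^2_+}\CE=\{\ad_X(e^2_+):X\in\g\}=[\g,e^2_+]=\g_{-2}\oplus\g_{-1}$ (the $\g_{-2}$ from $[\g_{-2},\g_2]=\span E$-direction inside $\CE$, the $\g_{-1}$ from $[\g_{-1},\g_2]\subset\g_1$). The Euler field at $e^2_+$ is $e^2_+$ itself, which under the identification of $T_{e^2_+}\CE$ with a subspace spanned by brackets corresponds to the $\g_{-2}$-direction (since $\ad$ of a $\g_{-2}$-element lands on $E$, hence on the line through $e^2_+$). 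Now I would evaluate $\Al=\tfrac12\E\ins\OM$ on $\ad_X(e^2_+)$ using \eqref{eqOm}: $\Al(\ad_X(e^2_+))=\tfrac12\OM(e^2_+,\ad_X(e^2_+))$, and writing $\E(e^2_+)=e^2_+=\ad_{Z}(e^2_+)$ for the appropriate $Z$ (essentially $Z\in\g_{-2}$ with $[Z,e^2_+]$ proportional to $E$), this becomes $\tfrac12 B(e^2_+,[Z,X])$. By invariance of $B$ and the grading, $B(e^2_+,[Z,X])$ is nonzero precisely when $X$ has a $\g_{-2}$-component and vanishes when $X\in\g_{-1}\oplus\p$; hence $\ker\Al$ inside $T\CE$ is exactly the $\g_{-1}$-part together with the vertical Euler direction. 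Pushing forward by $Tp$ kills the Euler direction, so $Tp\cdot\ker\Al$ at $o$ is the image of $\g_{-1}$ in $T_o\Ce$ — which is exactly $D$. Since everything is $G$-invariant, equality at one point propagates to all of $\Ce$.

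The main obstacle, and the step I would spend the most care on, is the bookkeeping of which graded pieces survive under $\ad_{(\cdot)}(e^2_+)$ and under contraction with $\E$ in formula \eqref{eqOm}: one must be careful that $\OM$ is a two-form on $\CE$ whose arguments are tangent vectors already in the form $\ad_X(a)$, that $\E$ is genuinely tangent to $\CE$ (which follows from $\L_\E\OM=2\OM$ and $\CE$ being a cone), and that the map $X\mapsto\ad_X(e^2_+)$ has kernel $\p'$ so the tangent space is $\g/\p'\cong\g_{-2}\oplus\g_{-1}$ with the stated identification. Once that linear-algebra dictionary is set up cleanly, each of the three identifications is a short invariance-plus-one-point argument; the degree count $B(\g_i,\g_j)=0$ unless $i+j=0$ does all the real work.
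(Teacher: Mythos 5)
Your overall strategy is the same as the paper's: compute the stabilizer subalgebra of $e^2_+$ to get $\CE\cong G/P'$, observe that $P/P'\cong\R_+$ acts by scaling along the ray to get $\Ce\cong G/P$, and then evaluate the $G$--invariant form $\Al$ at the base point via \eqref{eqOm} and a degree count against the Killing form. The first two identifications are fine and match the paper (the paper is equally brief about why the stabilizer is exactly the \emph{connected} group $P'$; cf.\ its remark (a) on coverings, so I won't press you on that).

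There is, however, a concrete error in the third step. Under the identification $\g/\p'\to T_{e^2_+}\CE$, $X+\p'\mapsto\ad_X(e^2_+)$, the Euler vector $\E(e^2_+)=e^2_+$ corresponds to the class of $\tfrac12E$ (since $[E,e^2_+]=2e^2_+$), i.e.\ to the $\span{E}$--direction in $\g/\p'\cong\g_{-2}\oplus\g_{-1}\oplus\span{E}$ --- \emph{not} to the $\g_{-2}$--direction as you claim. Your parenthetical justification is a non sequitur: for $Z\in\g_{-2}$ one has $\ad_Z(e^2_+)\in\span{E}\subset\g_0$, which is not on the line $\g_2=\span{e^2_+}$, so no $Z\in\g_{-2}$ satisfies $\ad_Z(e^2_+)=e^2_+$. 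This matters because your degree count then goes wrong: with $Z\in\g_{-2}$ the quantity $B(e^2_+,[Z,X])$ is nonzero precisely when $X$ has a component along $E$ in $\g_0$ (since $B(\g_i,\g_j)=0$ unless $i+j=0$ forces $[Z,X]\in\g_{-2}$, i.e.\ $X\in\g_0$), so carried out honestly your computation would place the $\g_{-2}$--direction \emph{inside} $\ker\Al$ and exclude the Euler direction --- the opposite of what you assert and of what is needed. With the correct $Z=\tfrac12E$ one gets $\Al(\ad_X(e^2_+))=\tfrac14B(e^2_+,[E,X])=-\tfrac12B(e^2_+,X_{-2})$, whose kernel is $(\g_{-1}\oplus\p)/\p'$; this is exactly the paper's formula $\phi(X)=B(e^2_+,[E,X])$, and from it the conclusion $Tp\.\ker\Al=D$ follows as you say. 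Relatedly, your statement $T_{e^2_+}\CE\cong\g/\p'\cong\g_{-2}\oplus\g_{-1}$ is off by one dimension (the $\span{E}$--summand is missing), which is the same confusion between the fiber direction of the cone and the $\g_{-2}$--direction. The endpoint of your argument is correct, but only because the stated conclusion of the degree count does not follow from the $Z$ you chose; replace $Z$ by $\tfrac12E$ and the proof closes.
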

\begin{proof}
By definition, the group $G$ acts transitively both on $\CE$ and $\Ce=\CE/\R_+$.
Since $[A,e^2_+]=0$ if and only if $A\in\p'$ and we assume the Lie subgroup
$P'\subset G$ corresponding to $\p'\subset\g$ is connected, the stabilizer of 
$e^2_+$ is precisely $P'$. Hence the orbit $\CE$ is identified with the 
homogeneous space $G/P'$. 
Since $P\supset P'$ is also connected, $P/P'$ is identified with the
subgroup $\{\exp tE:t\in\R\}\cong\R_+$ in $P$.
Hence $P$ preserves the ray of positive multiples of $e^2_+$ so that 
$\Ce=\CE/\R_+$ is identified with $G/P$.

For the last part of the statement, note that the Euler vector field 
is generated by (a non--zero multiple of) the grading element 
$E\in\frak{z}(\g_0)$. 
The canonical one--form $\Al$ on $\CE$ is $G$--invariant, so it is 
determined by its value in the origin $o\in G/P'$, i.e. 
$e^2_+\in\CE$, which is a $P'$--invariant one--form $\phi$ on $\g/\p'$.
By \eqref{eqOm} and \eqref{eqAl}, $\phi$ is explicitly given as 
$\phi(X)=B(e^2_+,[E,X])$, possibly up to a non--zero scalar multiple.
The formula is obviously independent of the representative of $X$ in $\g/\p'$
and the kernel of $\phi$ is just $(\g_{-1}\oplus\p)/\p'$.
The tangent map of the projection $p:\CE\to\Ce$ corresponds to the natural
projection $\g/\p'\to\g/\p$, hence $Tp\.\ker\Al\subset T\Ce$ corresponds to 
$(\g_{-1}\oplus\p)/\p\subset\g/\p$ which defines the contact distribution 
$D\subset T(G/P)$ in \eqref{eqD}.
\end{proof}

\subsection*{Remarks}
(a) Note that in contrast to the definition of the cone in \cite{CS} we do not
assume the center of $G$ is trivial.
Hence the two approaches differ by a (usually finite) covering. 
Because of the very local character of all the constructions that follow,
this causes no problem and we will not mention the difference below.

(b)
The homogeneous space $\CE\cong G/P'$ is an example of a symplectic 
homogeneous space, i.e. a homogeneous space with an invariant symplectic 
structure.
According to \cite[Corollary 1]{Chu}, for $G$ being semisimple,  
any simply connected symplectic homogeneous space of a Lie group $G$ is 
isomorphic to a covering of some $G$--orbit in $\g$, which is thought with 
the (restriction of the) canonical symplectic form.
Moreover the covering map and hence the orbit are unique.

(c) 
According to  \cite[Prop. 3.2]{CS}, the bundle $\CE\to\Ce$ can be
identified with the bundle of positive contact forms on $\Ce$ so that 
$\OM=d\Al$ corresponds to the restriction of the canonical symplectic form on
the cotangent bundle $T^*\Ce$. 
In detail, a section $s:\Ce\to\CE$ yields the contact one--form 
$\th_s:=s^*\Al$ and, by the naturality of the exterior differential, 
$d\th_s=s^*\OM$.

\subsection{General construction}		\label{2.4}
Let $a$ be an element of a real simple Lie algebra $\g$ admitting a contact 
grading.
With the notation as before, let $\xi_a$ be the fundamental vector field of 
the left action of $G$ on $\Ce\cong G/P$ corresponding to $a\in\g$.
Let us denote by $\Ce_a$ the (open) subset in $\Ce$ where $\xi_a$ is 
transverse to the contact distribution $D\subset T\Ce$ and oriented in 
accordance with a fixed orientation of $T\Ce/D$.
The vector field $\xi_a$ gives rise to a unique contact one--form 
$\th_a$  on $\Ce_a$ such that $\xi_a$ is its Reeb field.
In other words, $\th_a\in\Om^1(\Ce_a)$ is uniquely determined by the
conditions 
\begin{equation}			\label{eqth}
  \ker\th_a=D \mathrm{\ and\ } \th_a(\xi_a)= 1.
\end{equation}
Since $\xi_a$ is a contact symmetry, i.e. $\L_{\xi_a}D\subset D$, it easily
follows that $\L_{\xi_a}\th_a=0$ and consequently $\xi_a\ins d\th_a=0$.
Let $T_a\subset G$ denote the one--parameter subgroup corresponding to the
fixed element $a\in\g$.
We say that an open subset $U\subset\Ce_a$ is \textit{regular} if the local 
leaf space $M_U:=T_a\setminus U$ is a manifold.
Since $\xi_a\ins d\th_a=0$ and $d\th_a$ has maximal rank, it descends to a 
symplectic form $\om_a$ on $M_U$, for any regular $U\subset\Ce_a$.

Next, let $\pi:G\to G/P\cong\Ce$ be the canonical $P$--principal bundle and 
consider its restriction to $\Ce_a$.
If $\Ce_a$ is non--empty, then \cite[Theorem 3.4]{CS} describes explicitly a 
subset $\Ga_a$ in $\pi^{-1}(\Ce_a)\subset G$,  which forms a $G_0'$--principal
bundle over $\Ce_a$ where $G_0'$ is the subgroup of $P$ as in \ref{3.2}.
For a regular open subset $U\subset\Ce_a$, denote
$\Ga_U:=\pi^{-1}(U)\subset\Ga_a$.
Note that $\Ga_U$ is invariant under the action of $T_a$.
Denoting $B_U:=T_a\setminus\Ga_U$, 
$B_U\to M_U$ is a $G_0'$--principal bundle and \cite[Theorem 3.5]{CS} 
shows that the restriction of the $(\g_{-2}\oplus\g_{-1}\oplus
\g_0')$--component of the Maurer--Cartan form $\mu\in\Om^1(G,\g)$ to $\Ga_U$ 
descends to a $(\g_{-1}\oplus\g_0')$--valued coframe on $B_U$.
Altogether, the bundle $B_U\to M_U$ is interpreted as a classical
$G_0'$--structure and the $\g_0'$--part of the coframe above induces a linear
connection on $M_U$.
It turns out this connection is special symplectic connection with respect 
to the symplectic form $\om_a$.

Surprisingly, \cite[Theorem B]{CS} proves that any special symplectic 
connection can be at least locally obtained by the previous construction.
With an assumption on $\dim\g\ge 14$, which is equivalent to $\dim
M_U\ge 4$, we reformulate the main result of \cite{CS} as follows.

\begin{thm*}[\cite{CS}]			\label{th1}
Let $\g$ be a simple Lie algebra of dimension $\ge 14$ admitting a contact 
grading.
With the same notation as above, let $a\in\g$ be such that $\Ce_a\subset\Ce$
is non--empty and let $U\subset\Ce_a$ be regular. 
Then 

(a) the local quotient $M_U$ carries a special symplectic connection,

(b) locally, connections from (a) exhaust all the special symplectic connections.
\end{thm*}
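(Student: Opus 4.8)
The plan is to establish (a) and (b) in tandem, since both come down to a single representation-theoretic fact about contact gradings, and to organize everything around the Maurer--Cartan form $\mu\in\Om^1(G,\g)$, which is flat: $d\mu+\frac12[\mu,\mu]=0$. For (a) the data is already in place in \ref{2.4}: the reduction $\Ga_a\subset\pi^{-1}(\Ce_a)$ to a $G_0'$-principal bundle from \cite[Theorem 3.4]{CS} is left-$T_a$-invariant, since its defining condition on $g\in G$ depends only on $\Ad_{g^{-1}}(a)$, which $g\mapsto tg$ with $t\in T_a$ does not change; and, after dividing by $T_a$, the $(\g_{-2}\oplus\g_{-1}\oplus\g_0')$-part of $\mu$ descends to a $(\g_{-1}\oplus\g_0')$-valued coframe $(\th,\ga)$ on $B_U$ by \cite[Theorem 3.5]{CS}. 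Reading $\th\in\Om^1(B_U,\g_{-1})$ as a soldering form and $\ga\in\Om^1(B_U,\g_0')$ as a principal connection makes $B_U\to M_U$ a $G_0'$-structure with an induced linear connection $\nabla$ on $M_U$. That $\nabla$ is torsion-free and satisfies $\nabla\om_a=0$ I would read off from the $\g_{-1}$- and $\g_{-2}$-components of the structure equation, using $\g_0'\subset\frak{sp}(\g_{-1})$ — it annihilates $\g_2$, hence preserves the bracket $\g_{-1}\x\g_{-1}\to\g_{-2}\cong\R$ which descends to $\om_a$ — together with the normalizations defining $\Ga_a$; so $\nabla$ is a symplectic connection.

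\smallskip
The real content is that $\nabla$ is \emph{special}, and here the $\g_0'$-component of $d\mu+\frac12[\mu,\mu]=0$ is decisive. Because $E\in\frak z(\g_0)$ is Killing-orthogonal to $\g_0'$ and $[\g_{-2},\g_2]=\span{E}$, the $\g_{-2}\wedge\g_2$ and the $E$-terms drop out of the $\g_0'$-part, leaving the curvature of $\ga$ expressed solely through the $\g_1$-component of $\mu$ — a Rho-tensor-type term $[\,\cdot\,,\th]$ projected to $\g_0'$. Thus $R^\nabla$, regarded as a section of $\Lambda^2\g_{-1}^*\otimes\g_0'$, is forced into the image of the $G_0'$-equivariant ``Ricci-to-curvature'' map $\g_1\otimes\g_{-1}^*\to\Lambda^2\g_{-1}^*\otimes\g_0'$ coming from the Lie bracket — which is precisely the space of special symplectic curvature tensors for this $G_0'$-structure. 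The step I expect to be the main obstacle is the representation-theoretic bookkeeping identifying, across the classification of contact-graded simple real Lie algebras, which special class each $\g_0'$ yields: the Ricci-type connections for $\g=\frak{sp}(2n+2,\R)$, the Bochner--K\"ahler ones for $\g=\frak{su}(p+1,q+1)$, the Bochner--bi--Lagrangean ones for $\g=\frak{sl}(n+2,\R)$, and, for the remaining contact-graded algebras, exactly the special symplectic holonomies. Granting this dictionary, $\nabla$ is of the matching type, which is (a).

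\smallskip
For (b) the route is the reverse: given a special symplectic connection $\nabla$ on $(M,\om)$ of one of the four classes, its class selects, via the same dictionary, the contact-graded $\g$ and the underlying $G_0'$-structure $B\to M$ carrying $\nabla$ as its canonical connection. The plan is to prolong — form $\mu:=\th+\ga+\Rho+\cdots$ on a bundle built from $B$, where $\ga=\nabla$, the $\g_1$-valued piece $\Rho$ is manufactured from the Ricci tensor of $\nabla$, and the $\g_2$-valued piece comes from an auxiliary function whose differential the prolongation prescribes — and to check $d\mu+\frac12[\mu,\mu]=0$. The special-symplectic hypothesis on $R^\nabla$ makes the $\g_0'$-part of the equation hold with this $\Rho$, while the first and second Bianchi identities — which, precisely under the special condition, become a \emph{closed} first-order system for the Ricci tensor and the auxiliary function — supply the rest. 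A flat $\g$-valued coframe develops, locally and uniquely up to the $G$-action, onto $G$ by Cartan's theorem, so the prolongation is realized as $\Ga_U\subset G$ for a suitable $a\in\g$ and regular $U\subset\Ce_a$; unwinding the identifications shows that the construction of part (a) applied to this $a$ reproduces $(M,\om,\nabla)$ up to local isomorphism. The hypothesis $\dim\g\ge14$, i.e.\ $\dim M_U\ge 4$, serves to rule out the low-dimensional coincidences in which the curvature module degenerates and the type-to-algebra correspondence fails. As in (a), the crux is the same algebraic fact: the special condition is exactly what closes the prolonged system — strong enough to force flatness of $\mu$, yet not vacuous.
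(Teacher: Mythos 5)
You should first be aware that the paper contains no proof of this theorem: it is imported wholesale from \cite{CS} (part (a) is the construction of their Theorems 3.4 and 3.5, part (b) is their Theorem B), and subsection \ref{2.4} only \emph{summarizes} that construction, deferring every verification to the reference. So the only meaningful comparison is with the argument of \cite{CS} that the paper cites. Against that benchmark your outline is faithful in structure: for (a), the $T_a$-invariance of $\Ga_a$ is correctly checked, and your mechanism for specialness --- the $\g_0'$-component of $d\mu+\frac12[\mu,\mu]=0$, with the $[\g_{-2},\g_2]=\span{E}$ term killed by the projection to $\g_0'$ so that the curvature lands in the image of a bracket map built from the $\g_1$-part of $\mu$ --- is exactly the right one; for (b), prolongation to a flat $\g$-valued coframe followed by development into $G$ is indeed the strategy of \cite[Theorem B]{CS}.

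As a proof, however, your text has two genuine holes, and they sit precisely where the content of \cite{CS} lives. First, the claim that the image of the $G_0'$-equivariant map $\g_1\otimes\g_{-1}^*\to\Lambda^2\g_{-1}^*\otimes\g_0'$ (more precisely of its symmetric part, which is all that survives the first Bianchi identity) \emph{is} the special curvature module of the matching class --- Ricci-type, Bochner--K\"ahler, Bochner--bi--Lagrangean, or special holonomy --- for each contact-graded simple real $\g$ is the central representation-theoretic theorem; by ``granting this dictionary'' you assume the conclusion of (a) modulo routine checks. Second, for (b) the assertion that the special condition turns the Bianchi identities into a \emph{closed} first-order system for the connection, its Ricci tensor and the auxiliary scalar is again the heart of the matter and is asserted rather than argued; moreover, after developing the flat coframe into $G$ you still must \emph{produce} the element $a\in\g$ and identify the developed image with $\Ga_a$ --- in \cite{CS} this requires exhibiting $a$ as a parallel (hence constant) $\g$-valued invariant of the prolonged data, not merely ``unwinding identifications''. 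Your reading of the hypothesis $\dim\g\ge 14$ is a plausible guess but likewise unverified. In short: a correct table of contents for the proof in \cite{CS}, not a self-contained proof.
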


An instance of the correspondence between the various classes of special 
symplectic connections and contact gradings of simple Lie algebras is as 
follows.
For $\dim M_U=2n$, special symplectic connections of type (i), (ii) and (iii),
according to the definitions in \ref{2.1}, corresponds to the contact grading
of simple Lie algebras $\frak{sp}(2n+2,\R)$, $\frak{su}(p+1,q+1)$ with 
$p+q=n$, and $\frak{sl}(n+2,\R)$, respectively.
The corresponding parabolic contact structure on $\Ce\cong G/P$ is the 
projective contact structure, CR structure of hypersurface type, and 
Lagrangean contact structure, respectively. 
Details on each of these structures are  treated in the next section in 
details.

\section{Alternative realization of special symplectic connections} \label{4}

Below we describe parabolic contact structures corresponding to
special symplectic connections of type (i), (ii) and (iii) as mentioned above.
The aim of this section is, for each of the listed cases, to provide the
characterization of Theorem \ref{th1}, and so the realization of special
symplectic connections, in more explicit and satisfactory way.
For this purpose we  interpret the model cone $p:\CE\cong G/P'\to G/P\cong\Ce$ 
in each particular case and look for a natural ambient connection 
$\Nabla$ on $\CE$ which is good enough to give rise the easier interpretation.
We start with a reinterpretation of the construction from \ref{2.4} in terms 
of Weyl structures and conenctions.

\subsection{Partial contact connections}		\label{2.5}
In order to formulate the next results we need the notion of partial contact 
connections.
For a general distribution $D\subset TM$ on a smooth manifold $M$, a {\it 
partial linear connection} on $M$ is an operator $\Ga(D)\times\X(M)\to\X(M)$ 
satisfying the usual conditions for linear connections. 
In other words, we modify the notion of linear connection on $TM$ just
by the requirement to differentiate only in the directions lying in $D$.
If a partial linear connection preserves $D$, then restricting also the second
argument to $D$ yields an operator of the type $\Ga(D)\x\Ga(D)\to\Ga(D)$;
in the case the distribution $D\subset TM$ is contact, we speak about the 
\textit{partial contact connection}.

Given a contact distribution $D\subset TM$ and a classical linear connection
$\nabla$ on $M$, any choice of a contact one--form induces a partial contact
connection $\nabla^D$ as follows.
Let $\th\in\Om^1(M)$ be a contact one--form with the contact subbundle $D$ and 
let $r_\th$ be the corresponding Reeb vector field as in \ref{2.2}.
Let us denote by $\pi_\th:TM\to D$ the bundle projection induced by $\th$,
namely the projection to $D$ in the direction of $\span{r_\th}\subset TM$.
Now for any $X,Y\in\Ga(D)$, the formula 
\begin{equation}
  \nabla^D_XY:=\pi_\th(\nabla_XY)
\end{equation}
defines a partial contact connection and we say that $\nabla^D$ is induced 
from $\nabla$ by $\th$.
The \textit{contact torsion} of the partial contact connection $\nabla^D$
is a tensor field of type $D\wedge D\to D$ 
defined as the projection to $D$ of the classical torsion.
More precisely, if $\nabla^D$ is induced from $\nabla$ by $\th$, $T^D$ 
denotes the contact torsion of $\nabla^D$ and $T$ is the torsion of 
$\nabla$, then
$T^D(X,Y)=\pi_\th(T(X,Y))=\nabla^D_XY-\nabla^D_YX-\pi_\th([X,Y])$ 
for any $X,Y\in\Ga(D)$.

\subsection{General construction revisited}
In the construction of special symplectic connection in \ref{2.4}, 
we started with a choice of an element $a\in\g$ which in particular
induced a contact one--form $\th_a$ on $\Ce_a$.
Then we described the $G_0'$--principal bundle $\Ga_a\to\Ce_a$
which is actually a reduction of the $P$--principal bundle 
$\pi^{-1}(\Ce_a)\to\Ce_a$ to the structure group $G_0'\subset P$.
In terms of subsection \ref{3.2}, the couple $(\pi^{-1}(\Ce_a)\to\Ce_a,\mu)$ 
forms a flat parabolic geometry of type $(G,P)$ and the contact form $\th_a$ 
represents a choice of scale.
In this vein, the reduction $\Ga_a\subset\pi^{-1}(\Ce_a)$ above is
interpreted as (the image of) an exact Weyl structure 
and below we show this is exactly the one 
corresponding to $\th_a$.
In particular, the restriction of the $\g_0'$--part 
of the Maurer--Cartan form $\mu$ to $\Ga_a$ defines the exact Weyl connection
preserving $\th_a$.

Further restriction to a regular subset $U\subset\Ce_a$ and the factorization 
by $T_a$ finally yielded a special symplectic connection on $M_U=T_a\setminus 
U$.
In the current setting together with the definitions in \ref{2.5}, it is
obvious that the resulting connection on $M_U$ is fully determined by the 
partial contact connection induced by $\th_a$ from the exact Weyl connection
on $U\subset\Ce_a$ corresponding to $\th_a$.
Since any Weyl connection preserves the contact distribution $D$, the induced 
partial contact connection is just the restriction to the directions in $D$.
Altogether, we can recapitulate the results in \ref{2.4} as follows. 

\begin{prop*}				\label{th2}
Let $a\in\g$ be so that $\Ce_a\subset\Ce$ is non--empty and let 
$U\subset\Ce_a$ be regular.
Let $\th_a$ be the contact one--form on $U\subset\Ce_a$ determined by $a\in\g$
as in \eqref{eqth}.
Then the special symplectic connection on $M_U$ constructed in \ref{2.4} is 
fully determined by the partial contact connection induced from the exact 
Weyl connection corresponding to $\th_a$.
\end{prop*}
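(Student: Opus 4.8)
The plan is to reduce the statement to two facts. The substantive one is that the $G_0'$--principal subbundle $\Ga_a\subset\pi^{-1}(\Ce_a)$ furnished by \cite[Theorem 3.4]{CS} is precisely (the image of) the exact Weyl structure determined by the scale $\th_a$. Granting this, the second, essentially formal, point is that the linear connection extracted on $M_U$ in \ref{2.4} is recovered from the associated exact Weyl connection on $U$ by keeping only the contact directions and factoring out $T_a$; this is what ``fully determined by the partial contact connection'' means.

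For the first fact, restrict the homogeneous model to $\Ce_a$: this is the flat parabolic contact geometry $(\pi^{-1}(\Ce_a)\to\Ce_a,\mu)$ of type $(G,P)$, and by \ref{3.2} together with the identification of $\CE\to\Ce$ with the bundle of positive contact one--forms (Remark (c) in \ref{2.3}), its bundle of scales attached to the grading element is $\CE\to\Ce$, with the choices of scale being exactly the positive contact forms $\th=s^*\Al$. By \cite[Theorem 3.4]{CS} the set $\Ga_a$ is a reduction of the $P$--bundle to $G_0'\subset P$, and by \ref{3.2} such reductions correspond bijectively to choices of scale; hence $\Ga_a$ is an exact Weyl structure. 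To see that its scale is $\th_a$ I would compare the explicit description of $\Ga_a$ in \cite[Theorem 3.4]{CS} with the characterisation in \ref{3.2} of the $G_0'$--reduction attached to a contact form: both pick out the reduction along which the $\g_{-2}$--part of the Maurer--Cartan form trivialises $T\Ce_a/D$ by the form whose Reeb field is $\xi_a$, i.e.\ by $\th_a$ of \eqref{eqth}; equivalently, the exact Weyl connection carried by $\Ga_a$ is the one making $\th_a$ parallel. Since moreover this scale is flat, $\la'\o\mu_0$ vanishes along $\Ga_a$ and $\mu_0|_{\Ga_a}$ is already $\g_0'$--valued, so the $\g_0'$--part of $\mu$ along $\Ga_a$ used in \ref{2.4} is exactly the exact Weyl connection of $\th_a$. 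Write $\nabla$ for the linear connection it induces on $U$; it preserves $D=\ker\th_a$ (every Weyl connection preserves the underlying structure) and, being exact for this scale, preserves $\th_a$.

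For the second fact, $\xi_a$ is a contact symmetry with $\L_{\xi_a}\th_a=0$ and the left $T_a$--action preserves the Maurer--Cartan form, hence the whole exact Weyl structure of $\th_a$; therefore $\nabla$, together with $D$ and $\th_a$, is $T_a$--invariant and passes to $M_U=T_a\setminus U$. In the coframe of \cite[Theorem 3.5]{CS} on $B_U=T_a\setminus\Ga_U$ what survives is the push--down of $\mu_{-1}|_{\Ga_U}$ and $\mu_0|_{\Ga_U}$, the discarded $\g_{-2}$--direction being precisely the Reeb direction $\span{\xi_a}$ along which one factors; as $\mu_{-1}$ is $\g_{-1}$--valued it only registers directions in $D$, so what descends to $M_U$ is exactly the $D$--directional part of $\nabla$. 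But $\nabla$ preserves $D$, so $\pi_{\th_a}$ acts as the identity on sections of $D$ and this $D$--directional part is precisely the partial contact connection $\nabla^D$ induced from $\nabla$ by $\th_a$ in the sense of \ref{2.5}. Hence the special symplectic connection on $M_U$ is determined by $\nabla^D$, which is the assertion.

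The delicate point is the identification in the second paragraph: one must verify that the contact form implicitly carried by the \cite{CS} reduction $\Ga_a$ is $\th_a$ and not merely a positive multiple of it, which amounts to tracking the normalisation of $a$ against the maximal root element $e^2_+$ through \cite[Theorems 3.4 and 3.5]{CS}. Once $\Ga_a$ is identified with the exact Weyl structure of $\th_a$, the remaining steps are formal consequences of the Weyl calculus recalled in \ref{3.2} and the definitions of \ref{2.5}.
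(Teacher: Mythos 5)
Your overall strategy is exactly the paper's: reduce everything to the single claim that $\Ga_a$ is (the image of) the exact Weyl structure of the scale $\th_a$, and then observe that the descent to $M_U$ only ever sees the $D$--directional part of the resulting Weyl connection, i.e.\ the partial contact connection of \ref{2.5}. The reduction and the third paragraph (invariance under $T_a$, the coframe on $B_U$ registering only $\mu_{-1}$ and $\mu_{0}'$) match the paper's surrounding discussion, which likewise treats that part as formal.

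The one substantive step, however, is precisely the one you announce but do not carry out: identifying the scale of $\Ga_a$ as $\th_a$. Saying that the two descriptions ``both pick out the reduction along which the $\g_{-2}$--part of $\mu$ trivialises $T\Ce_a/D$ by $\th_a$'' is the statement to be proved, not an argument for it; and the remark that $\la'\o\mu_0$ vanishes along $\Ga_a$ because ``this scale is flat'' is circular, since that vanishing is exactly what makes $\Ga_a$ the exact Weyl structure of its scale. The paper closes this in two lines: the frame form of $\xi_a$ under $T\Ce\cong G\x_P(\g/\p)$ is $g\mapsto\Ad_{g^{-1}}(a)+\p$, and \cite[Theorem 3.4]{CS} describes $\Ga_a$ explicitly as $\{g\in G:\Ad_{g^{-1}}(a)=\tfrac12 e^2_-+\p'\}$, so this frame form is \emph{constant} on $\Ga_a$; hence $\xi_a$ is parallel for the Weyl connection carried by $\Ga_a$, and since $\xi_a$ is the Reeb field of $\th_a$ and the connection preserves $D$, the form $\th_a$ is parallel as well. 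Note also that your flagged ``delicate point'' about normalisation is vacuous: a constant positive rescaling of a parallel contact form is still parallel, so it determines the same flat connection on the scale bundle, hence the same exact Weyl structure, and the projection $\pi_\th$ (and with it the induced partial contact connection) depends on $\th$ only through $D$ and the line spanned by the Reeb field. So no tracking of the normalisation of $a$ against $e^2_+$ is needed; what is needed, and missing, is the explicit form of $\Ga_a$ and the constancy of the frame form of $\xi_a$ on it.
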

\begin{proof}
According to the discussion above, we only need to show that the exact Weyl
structure represented by $\Ga_a\subset G$ is the one corresponding to the
scale $\th_a$.
This easily follows from the definitions of $\th_a$ and $\Ga_a$:

The contact one--form  $\th_a$ is defined in \eqref{eqth} by $\xi_a$, the
fundamental vector field corresponding to the element $a\in\g$.
The vector field $\xi_a$ on $\Ce_a\subset G/P$ is the projection of the right 
invariant vector field on $\pi^{-1}(\Ce_a)\subset G$ generated by $a$.
Using the identification $T\Ce\cong G\x_P(\g/\p)$, the frame form 
corresponding to $\xi_a$ is the equivariant map $G\to\g/\p$
given by $g\mapsto \Ad_{g^{-1}}(a)+\p$.

On the other hand, the subset $\Ga_a\subset\pi^{-1}(\Ce_a)$ is explicitly described in 
the proof of \cite[Theorem 3.4]{CS} as
$$ \Ga_a=\left\{g\in G:\Ad_{g^{-1}}(a)=\tfrac12 e^2_-+\p'\right\}, $$
where $e^2_-$ is the unique element of $\g_{-2}$ such that $B(e^2_-,e^2_+)=1$.
Obviously, the restriction of the frame form of $\xi_a$ to $\Ga_a$ is
constant, which just means that  the vector field $\xi_a$
is parallel with respect to the exact Weyl connection corresponding to
$\Ga_a$.
Since $\xi_a$ is the Reeb vector field of the contact one--form $\th_a$, 
the latter is parallel if and only if the former is, which completes the proof.
\end{proof}

\subsection{Pull--back connections}		\label{4.0}
Let $p:\CE\to\Ce$ be the cone as in \ref{2.3}.
Any smooth section $s:\Ce\to\CE$ determines a principal connection on 
$\CE$; the corresponding horizontal lift of vector fields is denoted as 
$X\mapsto X^{hor}$.
An ambient linear connection $\Nabla$ on $\CE$ defines a
linear connection $\nabla^s$ on $\Ce$ by the formula
\begin{equation}		\label{eq2}
  \nabla^s_XY:=Tp(\Nabla_{X^{hor}}Y^{hor}).
\end{equation}
We call $\nabla^s$ the \textit{pull--back connection}
corresponding to $s$.
On the other hand, for any section $s$, which we call a choice of scale by
\ref{3.2}, let $\th_s=s^*\Al$ be the contact form and let 
$\bar\nabla^s$ be the corresponding exact Weyl connection on $\Ce$.
In the rest of this section, we are looking for an ambient connection $\Nabla$
on $\CE$ so that both $\nabla^s$ and $\bar\nabla^s$ induce the same partial 
contact connection on $D\subset T\Ce$.
For this reason it turns out that $\Nabla$ has to be symplectic, i.e.
$\Nabla\OM=0$.

The following statement provides together with Theorem \ref{th1} and
Proposition \ref{th2} the desired simple realization of special symplectic 
connections of type (i), (ii), and (iii) according to the list in \ref{2.1}.
The point is  that in all these cases the ambient connection $\Nabla$ is very
natural and easy to describe.

\begin{thm*}			\label{th3}
Let $\CE\to\Ce$ be the model cone for $\g=\frak{sp}(2n+2,\R)$,
$\frak{su}(p+1,q+1)$ or $\frak{sl}(n+2,\R)$.
Then there is an ambient symplectic connection $\Nabla$ on the total space of
$\CE$ so that, for any section $s:\Ce\to\CE$, the induced partial contact 
connections of the exact Weyl connection  and the pull--back connection
corresponding to $s$ coincide.
\end{thm*}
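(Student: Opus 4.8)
The proof is carried out case by case for the three algebras, the three arguments sharing a common skeleton, which I sketch once. First I would make the cone $p\colon\CE\to\Ce$ completely explicit. By Remark~\ref{2.3}(c) it suffices to realise $\Ce$ together with its contact distribution and to realise $\CE$ as the bundle of positive contact forms. For $\g=\frak{sp}(2n+2,\R)$ one takes $\Ce=\Proj^o(V)$ for a real symplectic space $(V,\om_0)$ of dimension $2n+2$ with its projective contact structure, and $\CE=V\bez0$. For $\g=\frak{su}(p+1,q+1)$, resp.\ $\g=\frak{sl}(n+2,\R)$, one realises $\Ce$ as the space of null lines in a complex vector space equipped with a Hermitian form of the relevant signature, resp.\ as the flag manifold of lines contained in hyperplanes of a real vector space, and $\CE$ accordingly as (a one--dimensional quotient of) an affine quadric inside an ambient vector space $W$. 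In every case $\E$ is the radial field of $W$, so $\Nabla_Z\E=Z$ for the flat connection of $W$, and $\OM=d\Al$ is the tangential restriction of a constant two--form. As $\Nabla$ I take this flat connection, or its tangential part along $\CE$ in the quadric cases; in the $\frak{su}$, resp.\ $\frak{sl}$, case $W$ is taken complex, resp.\ with its obvious invariant decomposition, so that $\Nabla$ is complex linear, resp.\ preserves the decomposition --- this is what is needed in the last step.

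That $\Nabla$ is symplectic is the easy part: it is torsion free, and $\Nabla\OM=0$ since $\OM$ is (the tangential restriction of) a parallel form and $\E$ is radial. Fix now a section $s$, put $\th_s:=s^*\Al$, and write $X\mapsto X^{hor}$ for the horizontal lift of the flat principal connection on $\CE$ determined by $s$. The computation rests on three identities: $[X^{hor},Y^{hor}]=[X,Y]^{hor}$ (flatness); $\Al(X^{hor})\equiv0$ for $X\in\Ga(D)$ (from $D=\ker\th_s$ and $\R_+$--equivariance of $\Al$); and $\OM(X^{hor},Y^{hor})=-\Al([X,Y]^{hor})$, which along $D$ recovers $d\th_s(X,Y)$ up to a nowhere--zero fibre factor. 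Combined with $\Nabla_Z\Al=\tfrac12(\Nabla_Z\E)\ins\OM=\tfrac12\,Z\ins\OM$, the first identity shows $\nabla^s$ is torsion free, and the identities together show $\th_s(\nabla^s_XY)$ is a fixed multiple of $d\th_s(X,Y)$ for $X,Y\in\Ga(D)$. Thus $\nabla^s$ need not preserve $D$, but the partial contact connection $(\nabla^s)^D:=\pi_{\th_s}\o\nabla^s$ is well defined, has vanishing contact torsion ($\nabla^s$ being torsion free), and --- using $\Nabla\OM=0$ again --- preserves $d\th_s|_{D\x D}$. On the other side, the exact Weyl connection $\bar\nabla^s$ is torsion free only up to its universal $\g_{-2}$--valued torsion, which along $D$ equals a multiple of $d\th_s(X,Y)\,r_{\th_s}$; hence $(\bar\nabla^s)^D:=\pi_{\th_s}\o\bar\nabla^s$ too has vanishing contact torsion, it preserves $\th_s$, the distribution $D$ and the Levi bracket (hence $d\th_s|_{D\x D}$), and it preserves the underlying structure restricted to $D$.

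The comparison is the core. The difference $B:=(\nabla^s)^D-(\bar\nabla^s)^D\colon D\x D\to D$ is symmetric (equal contact torsions), and $B'(X,Y,Z):=d\th_s(B(X,Y),Z)$ is totally symmetric (both preserve $d\th_s|_D$), so $B'\in\Ga(\operatorname{Sym}^3 D^*)$. For $\g=\frak{su}(p+1,q+1)$ both connections preserve the almost complex structure $J$ on $D$ --- for $(\nabla^s)^D$ this is where compatibility of the chosen $\Nabla$ with the complex structure of $W$ is used --- so $B(X,JY)=JB(X,Y)$, and together with $d\th_s(JU,V)=-d\th_s(U,JV)$ a short polarisation forces $B'=0$. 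For $\g=\frak{sl}(n+2,\R)$ the identical argument applies with the Lagrangean splitting $D=E\oplus F$ (equivalently the para--complex structure) replacing $J$. For $\g=\frak{sp}(2n+2,\R)$ there is no such tensor; one must instead show that $(\nabla^s)^D$ lies in the contact projective class of the homogeneous model, so that $B(X,Y)=\Y(X)Y+\Y(Y)X$ on $D$ for a one--form $\Y$, and then total symmetry of $B'$ with non--degeneracy of $d\th_s|_D$ forces $\Y=0$, provided $\dim D\ge4$, i.e.\ $\dim\g\ge14$. In all cases $B=0$, which is the assertion.

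I expect the projective contact case to be the real obstacle. There a contact form does not by itself pick out a connection, so establishing that $(\nabla^s)^D$ represents the right contact projective class and matching it on $D$ with the exact Weyl connection of $\th_s$ has to be done by an explicit computation in the projectivisation model --- this is exactly the technical part of~\ref{4.1} for which the authors thank A.~\v Cap. For the CR and Lagrangean structures, by contrast, the argument reduces to the uniqueness of the canonical partial contact connection (of Tanaka--Webster, resp.\ bi--Lagrangean, type) determined by $\th_s$ and the underlying structure, so there the comparison is essentially automatic once $\Nabla$ has been chosen compatibly.
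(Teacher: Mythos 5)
Your overall strategy is sound and, for two of the three cases, is essentially the paper's own. For $\g=\frak{su}(p+1,q+1)$ and $\g=\frak{sl}(n+2,\R)$ the paper does exactly what you propose: it takes $\Nabla$ to be the Levi--Civita connection of $\hat g=\OM(-,J-)$ (the flat connection of $\C^{n+1}$ in the CR case, and the Levi--Civita connection of the neutral metric on $T^*S^{n+1}$ in the Lagrangean case), observes that the exact Weyl partial contact connection is pinned down by vanishing contact torsion together with preservation of $d\th_s(-,J-)$ on $D$, and checks that the pull--back connection has the same two properties; your formulation via total symmetry of $B'\in\Ga(\operatorname{Sym}^3D^*)$ and $J$--equivariance of $B$ is just a polarized restatement of that uniqueness. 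One soft spot: for $\frak{sl}(n+2,\R)$ the paper realizes $\CE$ as $T^*S^{n+1}$ with the almost product structure coming from the flat projective structure on $S^{n+1}$, not as a quadric in an ambient vector space; your ``tangential part of a flat connection on a quotient of an affine quadric'' would still need to be shown torsion--free and symplectic for the restricted $\OM$ after the quotient, which is not automatic, whereas $\OM(-,J-)$ hands you a canonical torsion--free symplectic $\Nabla$ for free.

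The genuine divergence is the contact projective case, and there your argument as written has a gap you yourself flag. The paper does \emph{not} argue via the contact projective class; it computes the difference tensor directly by tractor calculus, writing $\Nabla$ as the connection induced by the Maurer--Cartan form on $T\CE\cong G\x_{P'}\R^{2n+2}$, comparing frame forms of $\nabla^s$ and $\bar\nabla^s$, and obtaining $\nabla^s_XY-\bar\nabla^s_XY=-d\th_s(X,Y)\,r_s$ for $X,Y\in\Ga(D)$, which is purely in the Reeb direction and so invisible to $\pi_{\th_s}$. Your alternative --- both partial connections lie in the model contact projective class with equal (zero) contact torsion, hence $B(X,Y)=\Y(X)Y+\Y(Y)X$, and total symmetry of $B'$ with non--degeneracy of $d\th_s|_D$ then kills $\Y$ when $\dim D\ge4$ --- is a legitimate route, but it leans on two facts you do not establish: that the pull--back partial connections represent the model's contact projective class (true, via great circles in $\OM$--isotropic planes, and stated in the paper), and that exact Weyl connections of the associated parabolic geometry induce partial connections in that same class with the stated difference--tensor normal form (this is Fox's theory and would need to be cited or proved). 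The paper's direct computation avoids both inputs and also yields the explicit Reeb--direction formula, which is what makes the statement checkable; if you want your version to stand alone, the contact--projective--class membership of $(\nabla^s)^D$ and the $\Y(X)Y+\Y(Y)X$ normal form are the two lemmas you must supply.
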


Although the definition of the cone $\CE\to\Ce$ is pretty general,
its convenient interpretation necessary to find a natural candidate for 
$\Nabla$ is no more universal.
In order to prove the Theorem, we deal in following three subsections with
each case individually.
It follows that the reasonable interpretation of the cone in any discussed 
case is more or less standard and we refer primarily to \cite{parabook} for 
a lot of details.
The candidate for an ambient connection $\Nabla$ is almost canonical, 
therefore in the proofs of subsequent Propositions we focus only in the 
justification of the choices.

Note that a natural guess for $\Nabla$ to be a $G$--invariant
symplectic connection on $\CE=G/P'$ does help only for contact projective
structures, i.e. the structures corresponding to the contact grading of 
$\g=\frak{sp}(2n+2,\R)$. 
This is due to the following statement, which is an immediate corollary of 
\cite[Theorem 3]{P-T}:
For a connected real simple Lie group $G$ with Lie algebra $\g$,
the nilpotent adjoint orbit $\Ce=\Ad_G(e^2_+)$ admits a $G$--invariant linear 
connection if and only if $\g\cong\frak{sp}(m,\R)$.

For a reader's convenience we assume the dimension of $\Ce=G/P$ to be always 
$m=2n+1$.
Consequently, $\dim\CE=2n+2$ and we further continue the convention that all 
important objects on $\CE$ are denoted with the hat.

\subsection{Contact projective structures}	\label{4.1}
Contact projective structures correspond to the contact grading of 
the Lie algebra $\g=\frak{sp}(2n+2,\R)$, the only real form of
$\frak{sp}(2n+2,\C)$ admitting the contact grading.
These structures are studied in \cite{Fox} in whole generality:
\textit{contact projective structure} on a contact manifold $(M,D)$ is defined 
as a contact path geometry such that the paths are among  geodesics of a linear
connection on $M$; the paths are then called \textit{contact geodesics}.
In analogy to classical projective structures, a contact projective structure
is given by a class of linear connections $[\nabla]$ on $TM$ having the same
contact torsion and the same non--parametrized geodesics such that the 
following property is satisfied: 
if a geodesic is tangent to $D$ in one point then it remains tangent to 
$D$ everywhere.

The model contact projective structure is observed on the projectivization
of symplectic vector space $(\R^{2n+2},\OM)$  with $\OM$ being a standard
symplectic form.
Let $G$ be  the group of linear automorphisms of $\R^{2n+2}$ 
preserving $\OM$, i.e. $G:=Sp(2n+2,\R)$.
In order to represent conveniently the contact grading of the corresponding Lie algebra, 
let $\OM$ be given by the matrix
$ \pmat{0&0&1\\0&\Bbb J&0\\-1&0&0}$,
with respect to the standard basis of $\R^{2n+2}$, where 
$\Bbb J=\pmat{0&\Bbb I_n\\-\Bbb I_n&0}$ and $\Bbb I_n$ is the identity matrix 
of rank $n$.
For $\Bbb J^t=-\Bbb J$, the Lie algebra $\g=\frak{sp}(2n+2,\R)$ is represented 
by block matrices of the form
$$
\g=\left\{\pmat{a&Z&z\\X&A&\Bbb JZ^t\\x&-X^t\Bbb
	J&-a}:A\in\frak{sp}(2n,\R)\right\},
$$
where the non--specified entries are arbitrary, i.e. 
$x,a,z\in\R$, $X\in\R^{2n}$ and $Z\in\R^{2n*}$, and the fact 
$A\in\frak{sp}(2n,\R)$ means that $A^t\Bbb J+\Bbb JA=0$.
Particular subspaces of the contact grading 
$\g_{-2}\oplus\g_{-1}\oplus\g_0\oplus\g_1\oplus\g_2$ of $\g$ 
is read along the diagonals so that $\g_{-2}$ is represented by $x\in\R$, 
$\g_{-1}$ by $X\in\R^{2n}$, etc.
In particular, $\g_0$ is represented by the pairs 
$(a,A)\in\R\x\frak{sp}(2n,\R)$ so that 
$\frak{sp}(2n,\R)$ is the semisimple part $\g_0^{ss}$ and the center 
$\frak{z}(\g_0)$ is generated by the grading element $E$ corresponding to the
pair $(1,0)$.
Following the general setup in \ref{2.2}, 
$\p=\g_0\oplus\g_1\oplus\g_2$, $\p'=\g_0^{ss}\oplus\g_1\oplus\g_2$, 
and $P,P'$ are the corresponding connected Lie subgroups in $G$.
Schematically, the parabolic subgroup $P\subset G$ is given as
$$
P=\left\{\pmat{r&*&*\\0&*&*\\0&0&r^{-1}}:r\in\R_+\right\}
$$
and $P'\subset P$ corresponds to $r=1$.
Easily, $G$ acts transitively on $\R^{2n+2}\bez0$, $P'$ is the stabilizer of the
first vector of the standard basis, and $P$ is the stabilizer of the
corresponding ray.
Hence $\CE\cong G/P'$ is identified with $\R^{2n+2}\bez0$ and its oriented 
projectivization $\Ce\cong G/P$ is further identified with the sphere
$S^{2n+1}\subset\R^{2n+2}$.
Altogether, we have interpreted the model cone for contact projective 
structures as 
$$ \CE\cong\R^{2n+2}\bez0\to S^{2n+1}\cong\Ce. $$
It is easy to check that 
the canonical symplectic form on $\CE$ corresponds to the standard symplectic
form on $\R^{2n+2}$, which is $G$--invariant by definition.
As a particular interpretation of the general definition in \ref{2.3},
the contact distribution $D\subset TS^{2n+1}$ 
is given by $D_v=v^\perp\cap T_vS^{2n+1}$, where $v\in S^{2n+1}$ and
$v^\perp=\{x\in\R^{2n+2}:\OM(v,x)=0\}$.

Next, let $\Nabla$ be the canonical flat connection on $\R^{2n+2}$.
Then the connections on $S^{2n+1}$ defined by \eqref{eq2} form  projectively
equivalent connections having the great circles as common non--parametrized
geodesics.
Any great circle is the intersection of $S^{2n+1}$ with a plane passing 
through 0. 
If the plane is isotropic with respect to $\OM$, we end up with contact 
geodesics.
Note that no  connection in the class preserves the contact
distribution, since it is obviously torsion--free, however the induced partial
contact connection coincides with the restriction of an exact Weyl connection to
$D$:

\begin{prop*}
Let $\CE\to\Ce$ be the model cone for $\g=\frak{sp}(2n+2,\R)$. 
Then $\Ce\cong S^{2n+1}$, $\CE\cong\R^{2n+2}\bez0$, $\OM$ corresponds to the 
standard symplectic form on $\R^{2n+2}$, and the ambient symplectic 
connection $\Nabla$ from Theorem \ref{th3} is the canonical flat connection 
on $\R^{2n+2}$.
\end{prop*}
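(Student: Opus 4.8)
The plan is to show that the pull--back connection $\nabla^s$ determined by the canonical flat connection $\Nabla$ on $\R^{2n+2}$ via \eqref{eq2} induces, for every choice of scale $s$, the same partial contact connection on $D$ as the exact Weyl connection $\bar\nabla^s$ corresponding to $\th_s=s^*\Al$. Since all the identifications ($\Ce\cong S^{2n+1}$, $\CE\cong\R^{2n+2}\bez0$, $\OM$ the standard symplectic form, $D_v=v^\perp\cap T_vS^{2n+1}$) have already been established in the text preceding the statement, the only thing left to verify is the claim of Theorem \ref{th3} for this particular $\g$, with this particular $\Nabla$. First I would note that $\Nabla$ is symplectic (it is flat and $\OM$ is constant), so it is a legitimate candidate, and that $\nabla^s$ is torsion--free as well, because $\Nabla$ is and the horizontal lift of the Lie bracket differs from the bracket of horizontal lifts only by a vertical term, which $Tp$ kills. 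Hence $\nabla^s$ cannot preserve $D$, exactly as remarked before the Proposition; this is why one must pass to the \emph{partial} contact connection rather than hope for equality of the full connections.

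The key computation is to identify $\pi_{\th_s}(\nabla^s_XY)$ for $X,Y\in\Ga(D)$. Write $s$ as a positive function times a fixed section, equivalently fix a local section and absorb the scale; the horizontal lift $X^{hor}$ of $X\in\Ga(D)$ with respect to the principal connection induced by $s$ is the unique lift tangent to $\ker\Al$ shifted appropriately, and $\nabla^s_XY=Tp(\Nabla_{X^{hor}}Y^{hor})$. I would compute $\Nabla_{X^{hor}}Y^{hor}$ concretely in $\R^{2n+2}$: for vector fields valued in the hyperplane distribution $v\mapsto v^\perp$, the flat covariant derivative is just the directional derivative of the $\R^{2n+2}$--valued function, and its normal (Euler/radial) component is controlled by $\OM$ through the second--fundamental--form--type identity $\OM(\Nabla_XY,\E)=-\OM(Y,X)$ coming from $d\th_s=s^*\OM$ and $\Al=\tfrac12\E\ins\OM$. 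Projecting back to $D$ along the Reeb direction $r_{\th_s}$ then yields a formula depending only on $\OM$, on the splitting $T\CE=D\oplus\span{\E}$ induced by $s$, and on $s$ itself. The parallel step on the Weyl side is to recall, from \ref{3.2} and remark (c) of \ref{2.3}, that the exact Weyl connection associated to $\th_s$ is the one reduced to $G_0'$ and characterized by $\th_s$ being parallel; restricting its induced linear connection to directions in $D$ gives a partial contact connection whose defining data are again $\OM$, the $s$--splitting, and $\th_s$. Matching the two descriptions is then a matter of checking they are literally the same operator on $\Ga(D)\x\Ga(D)$.

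The main obstacle I anticipate is purely bookkeeping: one must pin down the normalization constants in $\Al=\tfrac12\E\ins\OM$, in the identification of $\CE\to\Ce$ with the bundle of positive contact forms (remark (c)), and in the passage between the "frame form" picture on $G$ used in the proof of Proposition \ref{th2} and the concrete $\R^{2n+2}$ picture used here, so that the factor $\tfrac12$ and the sign in $d\th=-\th([X,Y])$ land consistently. A clean way around this is to bypass $G$ altogether: work entirely on $S^{2n+1}\subset\R^{2n+2}$, take the concrete Reeb field $r_{\th_s}$ of $\th_s=s^*\Al$, compute $\pi_{\th_s}(\Nabla^s_XY)$ directly, and then invoke the \emph{uniqueness} in \cite[Theorem 3.12]{C-S}---a partial contact connection on $D$ together with the choice of scale $s$ determines a unique Weyl structure---to conclude that $\nabla^s$ and $\bar\nabla^s$ agree on $D$ once one checks that $\nabla^s$ has the correct contact torsion (namely zero, since it is torsion--free, which matches the flat homogeneous model) and that $\th_s$ is parallel for the natural extension. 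That reduces the proof to the torsion computation plus the observation, already made in the text, that the great circles through $0$ lying in isotropic planes are precisely the contact geodesics of every connection in the projective class, which is exactly the characterization of the contact projective structure on $S^{2n+1}$.
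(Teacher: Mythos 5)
Your overall strategy -- compute the pull--back connection concretely in $\R^{2n+2}$ and then identify its partial contact connection with that of the exact Weyl connection by a uniqueness argument -- is genuinely different from the paper's proof, which stays on the group $G$: there the flat connection is realized as the tractor connection on $T\CE\cong G\x_{P'}\R^{2n+2}$ (using that the standard $P'$--representation on $\R^{2n+2}$ is isomorphic to $\g/\p'$ and extends to $G$), both $\nabla^s$ and $\bar\nabla^s$ are written in frame forms, and the difference tensor is computed explicitly to be $-d\th_s(X,Y)\,r_s$, visibly purely in the Reeb direction. However, as written your argument has two genuine gaps.

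First, the uniqueness you invoke is misattributed and, in the form you state it, insufficient. Theorem 3.12 of \cite{C-S} is a bijection between Weyl structures and principal connections on a scale bundle; it does not say that a partial contact connection on $D$ together with a scale determines a Weyl structure. The uniqueness you actually need is of Webster--Tanaka/Fox type (Theorem A of \cite{Fox}, quoted in remark (c) after the Proposition), and in the contact projective case the conditions ``preserves $d\th_s|_D$'' and ``zero contact torsion'' do \emph{not} suffice: $G_0'\cong Sp(2n,\R)$ here, and torsion--free partial connections preserving a symplectic form on $D$ form a large affine family -- this non--uniqueness is the very subject of the paper. The third condition, compatibility with the contact projective class of contact geodesics, is therefore essential, and you must verify it for \emph{both} sides: for $\nabla^s$ it follows from the great--circle description, but for $\bar\nabla^s$ you need to argue that Weyl connections represent the underlying (contact projective) structure. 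You mention the geodesics only in passing and never check that the partial contact connection induced by $\nabla^s$ preserves $d\th_s|_D$ (it does, by a short computation using $\Nabla\OM=0$ and $\th_s(Z)=0$, analogous to the one the paper carries out in the CR case, but it must be done).

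Second, your ``direct computation'' branch computes the wrong component. The identity $\OM(\Nabla_{X^{hor}}Y^{hor},\E)=\OM(X^{hor},Y^{hor})$ controls the $\E$-- and hence (after applying $Tp$) the Reeb--component of $\nabla^s_XY$ -- precisely the part that $\pi_{\th_s}$ discards. It gives no information about whether the $D$--components of $\nabla^s_XY$ and $\bar\nabla^s_XY$ agree, and you never produce a formula for $\bar\nabla^s_XY$ to compare against: ``the connection for which $\th_s$ is parallel'' singles out the exact Weyl structure among Weyl structures, not a linear connection among all linear connections. Either supply the missing uniqueness argument in full (Fox's normalization plus the three verifications above), or follow the paper and compute both connections in a common frame, e.g.\ via the frame--form formulas \eqref{eq4.2}--\eqref{eq4.5}.
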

\begin{proof}
Since $\CE\cong G/P'$, the tangent bundle $T\CE$ is identified with the
associated bundle $G\x_{P'}(\g/\p')$ via the Maurer--Cartan form $\mu$ on $G$,
where the action of $P'$ on $\g/\p'$ is induced from the adjoint representation.
On the other hand, $\CE\cong\R^{2n+2}\bez0$, so $\g/\p'\cong\R^{2n+2}$ as 
vector spaces. 
$\R^{2n+2}$ is the standard representation of $G$ and 
the essential observation for the next development is that its restriction to 
$P'\subset G$ is isomorphic to the representation of $P'$ on $\g/\p'$.
Explicitly, the isomorphism $\R^{2n+2}\to\g/\p'$ is given by 
\begin{equation}			\label{eq4.1}
\pmat{a\\X\\x}\mapsto \pmat{a&0&0\\X&0&0\\x&-X^t\Bbb J&-a}+\p'.
\end{equation}
Altogether, $T\CE\cong G\x_{P'}\R^{2n+2}$ and since the representation of $P'$
is the restriction of a representation of $G$, the Maurer--Cartan form $\mu$
induces a linear connection on $T\CE$ by general arguments as in \cite{CG}.
More precisely, $G\x_{P'}\R^{2n+2}\cong(G\x_{P'}G)\x_G\R^{2n+2}$, where
the (homogeneous) principal bundle $G\x_{P'}G\to\CE$ represents the symplectic 
frame bundle of $\CE$.
The Maurer--Cartan form $\mu$ on $G$ extends to a $G$--invariant principal connection on 
$G\x_{P'}G$.
The latter connection induces connections on all associated bundles, in
particular, this gives rise to a flat invariant symplectic connection on $T\CE$,
i.e. the canonical flat connection $\Nabla$ on $\R^{2n+2}$.
Due to this interpretation of $\Nabla$, we are going to describe the covariant
derivative with respect to $\Nabla$ in an alternative way which will provide a
comparison of the pull--back and exact Weyl connections.

For a vector field $\hat X\in\X(\CE)$ let us denote by $f_{\hat X}$ the corresponding 
frame form,  i.e. the $P'$--equivariant map from  $G$ to $\g/\p'\cong\R^{2n+2}$.
As $\Nabla$ is an instance of tractor connection, 
the frame form of the covariant derivative of $\hat Y$ in the direction of 
$\hat X$ turns out to be expressed as
\begin{equation}		\label{eq4.2}
  f_{\Nabla_{\hat X}\hat Y} =\hat\xi\. f_{\hat Y} +\mu(\hat\xi)\o f_{\hat Y},
\end{equation}
where $\hat\xi\in\X(G)$ is a lift of $\hat X\in\X(\CE)$ and
$\o$ denotes the standard representation of $\g$ on $\R^{2n+2}$; 
see \cite[section 2]{CG} or \cite[section 2.15]{Slo}.

From now on, let $s:\Ce\to\CE$ be  a fixed section of the model cone, i.e. 
a choice of scale, and let $\si^s:\G_0'\to G$ be the corresponding exact Weyl
structure, where $\G_0'$ is the principal $G_0'$--bundle as in \ref{3.2}.
Since $\R^{2n+2}\cong\g_-\oplus\span E$ as $G'_0$--modules, the section $s$ 
provides the identification $T\CE\cong\G_0'\x_{G_0'}(\g_-\oplus\span E)$
(similarly, $T\Ce\cong\G_0'\x_{G_0'}\g_-$).
If $\hat X$ is a vector field on $\CE$ and $f_{\hat X}$ the corresponding 
frame form as above, than the frame form corresponding to the
identification $T\CE\cong\G_0'\x_{G_0'}(\g_-\oplus\span E)$ is given by
$f_{\hat X}\o\si^s$. (Similarly for vector fields on $\Ce$.)
Restricting to the image of $\si^s$ within $G$, we do not distinguish between
these two interpretations.

In the definition of the pull--back connection, 
$X^{hor}\in\X(\CE)$ denotes the  horizontal lift of vector field 
$X\in\X(\Ce)$ with respect to the principal connection on $\CE$ determined by $s$.
According to the identifications above, the horizontality in terms of the frame forms 
is expressed as $f_{X^{hor}}=(0,f_X)^t\in\R^{2n+2}\cong\span E\oplus\g_-$.
Hence the formula \eqref{eq4.2} yield
\begin{equation}			\label{eq4.3}
  f_{\Nabla_{X^{hor}}Y^{hor}} =\pmat{0\\\hat\xi\. f_Y} +\mu(\hat\xi)\o\pmat{0\\f_Y}.
\end{equation}
The tangent map of the projection $p:\CE\to\Ce$ corresponds to the projection
$\pi:\g_-\oplus\span E\to\g_-$ in the direction of $\span E$,
hence the result of the covariant derivative $\nabla^s_XY$ with respect to the 
pull--back connection defined by \eqref{eq2} corresponds to the $\g_-$ part 
of \eqref{eq4.3}.

On the other hand, the covariant derivative $\bar\nabla^s$ with respect to the exact 
Weyl connection corresponding to $s$ is given by 
$f_{\bar\nabla^s_XY} =\xi^s\.(f_Y\o\si^s)$, where $\xi^s\in\X(\G_0')$ is the
horizontal lift of $X\in\X(\Ce)$ with respect to the principal connection on
$\G_0'$.
This is characterized by $\mu_0(T\si^s\.\xi^s)=0$, i.e. $T\si^s\.\xi^s=\xi+\ze_{\Rho^s(\xi)}$  
where $\xi$ is the lift such that $\mu(\xi)\in\g_-$ and $\Rho^s$ is the Rho--tensor.
Since $\xi^s\.(f_Y\o\si^s)=(T\si^s\.\xi^s)\.f_Y$, we conclude by the formula
\begin{equation}		\label{eq4.4}
  f_{\bar\nabla^s_XY} =\xi\. f_Y -\ad(\Rho^s(\xi))(f_Y).
\end{equation}

Altogether, considering $T\si^s\.\xi^s$ instead of $\hat\xi$ in \eqref{eq4.3},
the desired comparison of the pull--back connection and the exact 
Weyl connection determined by $s$ is given by 
\begin{equation}		\label{eq4.5}	
  f_{\nabla^s_XY-\bar\nabla^s_XY} =\pi\left((\mu(\xi)+\Rho^s(\xi))\o\pmat{0\\f_Y}\right),
\end{equation}
where $\pi$ denotes the projection $\g_-\oplus\span E\to\g_-$ as before.
In particular, expressing the standard action on the right
hand side of \eqref{eq4.5} for $X,Y\in\Ga(D)$, i.e. for $f_X,f_Y\in\g_{-1}$, 
the difference tensor turns out to be of the form
\begin{equation}		
   \nabla^s_XY-\bar\nabla^s_XY =-d\th_s(X,Y) r_s
\end{equation}
where $\th_s$ and $r_s$ is the contact form and the Reeb vector field,
respectively, corresponding to the scale $s:\Ce\to\CE$.
This shows that the induced partial contact connections of the 
pull--back connection and the exact Weyl connection determined by $s$ coincide.
\end{proof}

\subsection*{Remarks}
(a) 
The paper \cite{CGS} provides a characterization of symplectic connections 
of Ricci type with specific symplectic connections obtained by a reduction 
procedure from a hypersurface in  a symplectic vector space.
More specifically, for $a\in\g=\frak{sp}(2n+2,\R)$ the hypersurface
in $\R^{2n+2}$ is defined by 
$$ 
\Sigma_a:=\{x\in\R^{2n+2}:\OM(x,ax)=1\},
$$ 
where $\OM$ is the standard symplectic form, and
all the connections are induced from the flat ambient connection on 
$\R^{2n+2}$.
Basically, this is just another view on the description of
pull--back connections which is conceivable whenever $\Ce$ can be interpreted 
as a hypersurface in $\CE$; the section $s:\Ce\to\CE$ is then understood as 
a deformation of the hypersurface.
In the current case, $\Ce\cong S^{2n+1}\subset\R^{2n+2}\bez0\cong\CE$ and
one easily shows that for the section $s_a$ corresponding to an 
element $a\in\g$, the image of $s_a$ really coincides with the hypersurface 
$\Sigma_a$ above.

(b) Note that the argument in the proof of Proposition above can be 
directly generalized in at least two ways:
First, the homogeneous model and the flat connection $\Nabla$ can be replaced
by a general manifold $M$ with contact projective structure and the unique ambient 
connection on the total space of a scale bundle over $M$, respectively, 
which is established in \cite[Theorem B]{Fox}.
The general ambient connection is induced by a canonical Cartan connection in 
the very same manner as above.
Second, the comparison of pull--back connections and exact Weyl connections can 
be extended to general Weyl connections. 
Indeed, any Weyl connection corresponds by \ref{3.2} to a principal connection
on a scale bundle, which actually is the important ingredient in the 
definition of pull--back connections in \eqref{eq2}.
The fact that the principal connection on the bundle of scales is given by a 
section plays no role in this context.

(c) By \cite[Theorem A]{Fox}, any choice of scale determines a unique linear 
connection on $M$ so that it preserves the corresponding contact 
form and its differential, represents the contact projective structure, and 
has a normalized torsion.
Note that this is neither the pull--back connection nor the exact Weyl
connection, however the induced partial contact connection is still the same.
Connections of this type are close analogies of Webster--Tanaka connections
well known in  CR geometry.

\subsection{CR structures of hypersurface type}		\label{4.2}
These structures correspond to the contact grading of the
Lie algebra $\g=\frak{su}(p+1,q+1)$, a real form of $\frak{sl}(n+2,\C)$, 
where $p+q=n$ once for all.
In fact the correct full name of the general geometric structure of this type
is \textit{non--degenerate partially integrable almost CR structure of
hypersurface type}.
This structure on a smooth manifold $M$ is given by a contact distribution 
$D\subset TM$ with a complex structure $J:D\to D$ so that the Levi bracket
$\L:D\wedge D\to TM/D$ is compatible with the complex structure, i.e.
$\L(J-,J-)=\L(-,-)$ for any $-,-\in\Ga(D)$.
A~choice of contact form provides an identification of $T_xM/D_x$ with $\R$, 
for any $x\in M$, and the latter condition on the
Levi bracket says that $\L(-,J-)$ is a non--degenerate symmetric bilinear 
form on $D$, that is a pseudo--metric.
Hence $\L(-,J-)+i\L(-,-)$ is  a Hermitean form on $D$ whose signature $(p,q)$
is the \textit{signature} of the CR structure.

The classical examples of CR structures of the above type are induced on 
non--degenerate real hypersurfaces in $\C^{n+1}$.
In general, for a real submanifold $M\subset\C^{n+1}$, the CR structure on 
$M$ is induced from the ambient complex space $\C^{n+1}$ so that
the distribution $D$ is the maximal complex subbundle in $TM$, and the complex structure $J$
is the restriction to $D$ of the multiplication by $i$.
The model CR structures of hypersurface type are induced on the so--called
\textit{hyperquadrics}, cf. \cite{Jac}.
A~typical hyperquadric of signature $(p,q)$ is described as a graph
\begin{equation}		\label{eq3}
\Cal Q:=\{(z,w)\in\C^n\x\C:\Im(w)=h(z,z)\},
\end{equation}
or as
\begin{equation}		\label{eq4}
\Cal S:=\{(z,w)\in\C^n\x\C:h(z,z)+|w|^2=1\},
\end{equation}
where $h$ is  a Hermitean form of signature $(p,q)$.
It turns out that the induced CR structures on $\Cal Q$ and $\Cal S$ are 
equivalent and the equivalence is established by the restriction of the 
biholomorphism 
$(z,w)\mapsto\left(\frac{z}{w-i}, \frac{1-iw}{w-i}\right)$.
Note that this identification is almost global (only the point $(0,i)\in\Cal S$
is mapped to infinity) and projective.
In particular, $\Cal Q$ and $\Cal S$ are different affine realizations of a 
projective hyperquadric in $\C\P^{n+1}$ which  is identified with the 
homogeneous space $G/P$ as follows.

Let $G$ be the group of complex linear automorphisms of $\C^{n+2}$ preserving
a Hermitean form $H$ of signature $(p+1,q+1)$, i.e. $G:=SU(p+1,q+1)$.
Let the Hermitean form $H$ be given by the 
matrix $\pmat{0&0&-\frac i2\\0&\Bbb I&0\\\frac i2&0&0}$, with respect to the standard 
basis  $(e_0,e_1,\dots,e_n,e_{n+1})$, where 
$\Bbb I=\pmat{\Bbb I_p&0\\0&-\Bbb I_q}$ represents the Hermitean form $h$ of 
signature $(p,q)$ on $\span{e_1,\dots,e_n}\subset\C^{n+2}$.
According to this choice,
the Lie algebra $\g=\frak{su}(p+1,q+1)$ is represented by matrices
of the following form with blocks of sizes 1, $n$, and 1
$$
\g=\left\{\pmat{c&2iZ&v\\X&A&\Bbb I\bar Z^t\\u&-2i\bar X^t\Bbb I&-\bar c}:
u,v\in\R, 
A\in\frak{u}(p,q), \tr(A)+2i\Im(c)=0 \right\},
$$
where the non--specified entries are arbitrary, i.e. $X\in\C^n, Z\in\C^{n*}$, 
and $c\in\C$.
(Note that $A\in\frak{u}(p,q)$ means $\bar A^t\Bbb I+\Bbb IA=0$, so in
particular $\tr(A)$ is purely imaginary complex number.)
The contact grading of $\g$ is read along the diagonals as in \ref{4.1}.
In particular, $\g_0$ is represented by the pairs $(c,A)\in\C\x\frak{u}(p,q)$
with the constrain $\tr(A)+2i\Im(c)=0$.
The center $\frak{z}(\g_0)$ is two--dimensional, where the grading element 
$E$ corresponds to the pair $(1,0)$, and the semisimple part $\g_0^{ss}$ is
isomorphic to $\frak{su}(p,q)$.
The subalgebra $\g_0'\cong\frak{u}(p,q)$ corresponds to the pairs of the form 
$(-\frac12\tr(A),A)$.
Note that the compatibility of the Levi bracket with the complex structure on $D$
is reflected here by the fact that $[iX,iY]=[X,Y]$ for any  $X,Y\in\g_{-1}$.
Subalgebras $\p'\subset\p\subset\g$ are defined as in \ref{2.2}, $P'\subset P$ 
are the corresponding connected subgroups in $G$.
The parabolic subgroup $P\subset G$ is schematically indicated as
$$
P=\left\{\pmat{re^{i\th}&*&*\\0&*&*\\0&0&\frac1re^{i\th}}: r\in\R_+\right\}
$$
and $P'\subset P$ corresponds to $r=1$.

Let $\Cal N$ be the set of non--zero null--vectors in $\C^{n+2}$ with respect to the
Hermitean form $H$.
Clearly, $G$ preserves and acts transitively on $\Cal N$.
If $Q\subset G$ denotes the stabilizer of the first vector of the standard 
basis then $\Cal N$ is identified with the homogeneous space $G/Q$.
Obviously $Q\subset P'\subset P$ corresponds to $r=1$ and $\th=0$ according to
the description of $P$ above.
Since $P'/Q\cong U(1)$, the group of complex numbers of unit
length, the homogeneous space $G/P'$ is identified with $\Cal N/U(1)$.
Next $P\supset P'$ is the stabilizer of the complex line generated by the 
first vector of the standard basis,
so the homogeneous space $G/P$ is identified with $\Cal N/\C^*$, the
complex projectivization of $\Cal N$.
Altogether a natural  interpretation of the model cone in this case is
$$ \CE\cong\Cal N/U(1)\to\Cal N/\C^*\cong\Ce. $$

A~direct substitution shows that the hyperquadric $\Cal Q$ from \eqref{eq3}
is the intersection of $\Cal N$ with the complex hyperplane $z_0=1$.
According to the new basis $(e_0+ie_{n+1},e_1,\dots,e_n,e_0-ie_{n+1})$
of $\C^{n+2}$, the Hermitean metric $H$ is in the diagonal form
so that the hyperquadric $\Cal S$ from \eqref{eq4} is the intersection of 
$\Cal N$ with the complex hyperplane $z'_0=1$ (where the dash refers to
coordinates with respect to the new basis).
This recovers the identification above, in particular, both $\Cal Q$ and
$\Cal S$  are identified with $\Cal N/\C^*\cong\Ce$.

From now on, let $\Ce$ be the hyperquadric $\Cal S$ in the hyperplane $z'_0=1$
which we naturally identify with $\C^{n+1}$.
This hyperplane without the origin is further identified with $\Cal
N/U(1)\cong\CE$ under the map $(z',w')\mapsto(\sqrt{|h(z',z')+|w'|^2|},z',w')$.
Denote by $\h$ the induced Hermitean metric (of signature $(p+1,q)$) on this
hyperplane and let $\OM$ be its imaginary part. 
Obviously, both $\h$ and $\OM$  are $G$--invariant, and an easy calculation
shows that $\OM$ corresponds to the canonical symplectic form on $\CE$ up to
non--zero constant multiple.
Altogether, the defining equation \eqref{eq4} for  $\Cal S\subset\C^{n+1}$ 
reads as 
\begin{equation}			\label{eq5}
\Cal S=\{z\in\C^{n+1}:\h(z,z)=1\}
\end{equation}
and the most satisfactory interpretation of the model cone is
$$
\CE\cong\C^{n+1}\bez0\to\Cal S\cong\Ce.
$$

\begin{prop*}
Let $\CE\to\Ce$ be the model cone for $\g=\frak{su}(p+1,q+1)$. 
Then $\CE\cong\C^{n+1}\bez0$ and $\Ce\cong\Cal S$, the hyperquadric in 
$\C^{n+1}\bez0$ given by \eqref{eq5}, where $\h$ is the Hermitean metric of 
signature $(p+1,q)$.
Further, $\OM$ corresponds to the imaginary part of $\h$ and the ambient 
symplectic connection $\Nabla$ from Theorem \ref{th3} is the canonical flat 
connection on $\C^{n+1}$.
\end{prop*}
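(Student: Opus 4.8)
The plan is to follow the proof of the previous Proposition \emph{mutatis mutandis}. The identifications asserted in its first two sentences — $\CE\cong\C^{n+1}\bez0$, $\Ce\cong\Cal S$, and $\OM$ equal up to a nonzero constant to $\Im\h$ — have already been worked out in the discussion leading up to \eqref{eq5}; so the only thing left to prove is that the flat connection $\Nabla$ on $\C^{n+1}$ satisfies the conclusion of Theorem \ref{th3}, i.e. that for every section $s:\Ce\to\CE$ the pull--back connection $\nabla^s$ and the exact Weyl connection $\bar\nabla^s$ induce one and the same partial contact connection on $D\subset T\Ce$.

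Since the frame--form description \eqref{eq4.4} of the exact Weyl connection is entirely general, it suffices, as in \ref{4.1}, to produce a matching frame--form expression for $\nabla^s$ and to compare the two. The place where the argument must depart from the symplectic case is the following: $\C^{n+1}$ has real dimension $2n+2$, whereas every nontrivial real representation of $G=SU(p+1,q+1)$ has real dimension at least $2n+4$ (the standard module $\C^{n+2}$), so $\C^{n+1}$ is \emph{not} the restriction to $P'$ of a $G$--module, $\Nabla$ is \emph{not} a tractor connection, and — in accordance with the remark after Theorem \ref{th3} — it is not $G$--invariant. Hence formula \eqref{eq4.2} is unavailable and one has to exploit the affine structure of $\C^{n+1}$ directly. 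Concretely, I would first record an isomorphism $\g/\p'\cong\C^{n+1}$ of $P'$--modules by an explicit formula in the matrix realization of \ref{4.2}, analogous to \eqref{eq4.1}, under which $\g_-\subset\g/\p'$ corresponds to the tangent space of $\Cal S$ at the base point and $\span E$ to its radial (Euler) direction. Then, along the image of a Weyl structure $\si^s:\G_0'\to G$, there are two trivializations of $T\CE$: the ``constant'' one coming from $\CE\subset\C^{n+1}$, in which $\Nabla$ has the trivial frame--form expression $f_{\Nabla_{\hat X}\hat Y}=\hat X\.f_{\hat Y}$ and the horizontal lift $X^{hor}$ of \eqref{eq2} is visible directly, and the ``Maurer--Cartan'' one $T\CE\cong G\x_{P'}(\g/\p')$, in which $\bar\nabla^s$ is given by \eqref{eq4.4}. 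The bridge between them is a pointwise linear isomorphism $\C^{n+1}\to\g/\p'$, constant at the base point but varying over $\Ce$ according to the (non--linear) $G$--action on $\CE$; its covariant derivative supplies the $\C^{n+1}$--valued one--form that plays the role of ``$\mu(\hat\xi)\o(-)$'' in \eqref{eq4.2}--\eqref{eq4.3}, and it is computed from the explicit fundamental vector fields of the $G$--action on $\C^{n+1}$.

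Once both connections are in frame--form, the comparison runs exactly as in \eqref{eq4.5} and the displays after it: the difference $\nabla^s_XY-\bar\nabla^s_XY$ is a tensor, one evaluates it for $X,Y\in\Ga(D)$ (i.e. for frame forms $f_X,f_Y\in\g_{-1}$) by inserting the relevant algebraic actions, and the assertion to be checked is that the result is a multiple of the Reeb field $r_s$ of $\th_s=s^*\Al$ — of the shape $-d\th_s(X,Y)\,r_s$ up to the constant relating $\OM$ to $\Im\h$ — hence annihilated by the projection $\pi_{\th_s}:T\Ce\to D$. This gives the coincidence of the induced partial contact connections, and together with Theorem \ref{th1} and Proposition \ref{th2} the promised realization of the special symplectic connections of type (ii). I expect the bridging step of the previous paragraph to be the real obstacle — setting up the dictionary between the constant and the Maurer--Cartan trivialization of $T\CE$ without the crutch of a tractor connection. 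The cleanest way to organize it is probably the hypersurface viewpoint of Remark (a) after the previous Proposition: realize $s(\Ce)$ as a deformed real hypersurface in $\C^{n+1}$, so that $\nabla^s$ becomes the part of $\Nabla$ tangential along $s(\Ce)$ (the complement being the radial direction $\E$) and the ``connection term'' above becomes a second--fundamental--form--type tensor; the remaining work is then the same bookkeeping with the contact grading of $\g=\frak{su}(p+1,q+1)$ as in \ref{4.1}, using the identities $[iX,iY]=[X,Y]$ and $\tr(A)+2i\Im(c)=0$ from \ref{4.2}.
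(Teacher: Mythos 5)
Your overall reduction is fine --- the identifications $\CE\cong\C^{n+1}\bez0$, $\Ce\cong\Cal S$ and $\OM=\Im\h$ (up to a constant) are indeed established in the text preceding the Proposition, and your observation that the tractor formula \eqref{eq4.2} is unavailable here (since $\g/\p'\cong\C^{n+1}$ has real dimension $2n+2$ while the smallest nontrivial $G$--module is $\C^{n+2}$, and $\Nabla$ is not $G$--invariant) is correct and important. But the proposal stops exactly where the proof would have to begin. On your route the entire content of the statement is the computation of the difference tensor $\nabla^s_XY-\bar\nabla^s_XY$ for $X,Y\in\Ga(D)$ and the verification that it is proportional to the Reeb field; you describe this as ``the assertion to be checked'' and you yourself flag the bridge between the constant trivialization of $T\CE$ inside $\C^{n+1}$ and the Maurer--Cartan trivialization $G\x_{P'}(\g/\p')$ as ``the real obstacle''. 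Neither the explicit $P'$--module isomorphism $\C^{n+1}\cong\g/\p'$, nor the one--form replacing $\mu(\hat\xi)\o(-)$, nor the final evaluation on $\g_{-1}$ is actually produced, so what you have is a plan rather than a proof.

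The paper avoids all of this by a characterization argument. Since $\Nabla$ is torsion--free and preserves both $\OM$ and the complex structure, it is the Levi--Civita connection of the pseudo--metric $\hat g=\OM(-,i-)$ on $\C^{n+1}$. For a scale $s$ with $\th=s^*\Al$, the Levi form $g=d\th(-,i-)$ on $D$ is preserved by the exact Weyl connection, whose contact torsion vanishes on the homogeneous model; and a partial contact connection on $D$ is \emph{uniquely} determined by preserving $g$ and having vanishing contact torsion (the Webster--Tanaka--type uniqueness). It therefore suffices to check these two properties for $\nabla^s$: torsion--freeness is inherited from $\Nabla$, and $\nabla^sg=0$ follows by lifting the expansion of $(\nabla^s_Xg)(Y,Z)$ along $s$ to $(\Nabla_{X^{hor}}\hat g)(Y^{hor},Z^{hor})=0$, using $d\th=s^*\OM$ and $Ts\.\nabla^s_XY=\Nabla_{X^{hor}}Y^{hor}\o s\mod\span\E$. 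If you want to salvage your approach you must either carry the frame--form computation through in full (your hypersurface/second--fundamental--form reformulation is a reasonable way to organize it), or, more economically, switch to this uniqueness argument, which is also the one that transfers verbatim to the Lagrangean contact case in \ref{4.3}.
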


\begin{proof}
The connection $\Nabla$ is obviously symplectic, i.e. $\Nabla$ is 
torsion--free and $\Nabla\OM=0$.
By definition, $\OM$ is the imaginary part of the Hermitean metric $\h$ on
$\C^{n+1}$.
Its real part $\hat g$ is then expressed in terms of $\OM$ and the standard 
complex structure on $\C^{n+1}$ as $\hat g=\OM(-,i-)$.
This is a real pseudo--metric on $\C^{n+1}\cong\R^{2n+2}$ of signature $(2p+2,2q)$
and $\Nabla$ can be seen as the Levi--Civita connection of $\hat g$.

As in general, let $\Al:=\E\ins\OM$.
Let $s:\Cal S\to\C^{n+1}\bez0$ be  a section of the cone and let 
$\th:=s^*\Al$ be the corresponding contact one--form on $\Cal S$.
Then $g:=d\th(-,i-)$ is a non--degenerate symmetric bilinear form on the
contact distribution $D$ which has to be preserved by the Weyl connection 
$\bar\nabla^s$. 
Next, since we deal with the homogeneous model, the contact torsion of 
$\bar\nabla^s$ vanishes.
In fact, the corresponding partial contact connection on $D$ is 
uniquely determined by the fact that 
(i) it leaves $g$ to be parallel and 
(ii) its contact torsion vanishes.

In order to prove the statement, it suffices to show that (i) and (ii) is
satisfied also by the partial contact connection induced by 
the pull--back connection $\nabla^s$ corresponding to $s$.
However, since $\Nabla$ is torsion--free, the pull--back connection $\nabla^s$ 
is torsion--free as well, hence the condition (ii) is satisfied trivially.
The condition (i) follows as follows: 
For $X,Y,Z\in \Ga(D)$, expand
$$
(\nabla^s_X g)(Y,Z) = X\.d\th(Y,iZ)-d\th(\nabla^s_XY,iZ)-d\th(Y,i\nabla^s_XZ).
$$
Since $\th=s^*\Al$ and $d\Al=\OM$, by the naturality of exterior differential we
have got $d\th=s^*\OM$.
Next easily, $Ts\.X=X^{hor}\o s$ and, by the definition of the pull--back 
connection in \ref{4.0}, 
$Ts\.\nabla^s_XY=\Nabla_{X^{hor}}Y^{hor}\o s\mod\span\E$.
Since $\Al=\frac12\E\ins\OM$ and $D=Tp\.\ker\Al$, the previous formula is 
rewritten as
\begin{eqnarray*}
X\.\OM(Ts\.Y,Ts\.iZ)-\OM(Ts\.\nabla^s_XY,Ts\.iZ)-\OM(Ts\.Y,Ts\.i\nabla^s_XZ)=\\
	=X^{hor}\.\OM(Y^{hor},iZ^{hor})-\OM(\Nabla_{X^{hor}}Y^{hor},
		iZ^{hor})-\OM(Y^{hor},i\Nabla_{X^{hor}}Z^{hor}).
\end{eqnarray*}
However, the very last expression is just $(\Nabla_{X^{hor}}\hat g)(Y^{hor},
Z^{hor})$, which vanishes trivially by definitions.
\end{proof}

\subsection{Lagrangean contact structures}	\label{4.3}
Lagrangean contact structures correspond to the contact grading of 
$\g=\frak{sl}(n+2,\R)$, another real form of $\frak{sl}(n+2,\R)$.
\textit{Lagrangean contact strucure} on a smooth manifold $M$ consists of the 
contact distribution $D\subset TM$ and a fixed decomposition $D=L\oplus R$ so 
that the subbundles $L$ and $R$ are \textit{Lagrangean}, i.e. isotropic
with respect to the Levi bracket $\L:D\wedge D\to TM/D$.
These structures was profoundly studied in \cite{Tak} where we refer for a lot
of details.
The model Lagrangean contact structure appears on the projectivization of
the cotangent bundle of real projective space; let us present the algebraic 
background first.

The contact grading of $\g=\frak{sl}(n+2,\R)$ is read diagonally as in 
\ref{4.1} and \ref{4.2} from the following block decomposition
$$
\g=\left\{\pmat{a&Z_1&z\\X_1&B&Z_2\\x&X_2&c}:a+\tr(B)+c=0\right\},
$$
where as usual the non--specified entries are arbitrary, i.e. $x,a,c,z\in\R$,
$X_1,Z_2\in\R^n$, $X_2,Z_1\in\R^{n*}$, and $B\in\frak{gl}(n,\R)$.
The subalgebra $\g_0$ is represented by the triples
$(a,B,c)\in\R\x\frak{gl}(n,\R)\x\R$ so that $a+\tr(B)+c=0$.
The center $\frak{z}(\g_0)$ is two--dimensional and the grading element $E$
corresponds to $(1,0,-1)$.
The semisimple part $\g_0^{ss}$ is isomorphic to $\frak{sl}(n,\R)$ and the
subalgebra $\g_0'\cong\frak{gl}(n,\R)$  is represented by all triples of the 
form $\left(-\frac12\tr(B),B,-\frac12\tr(B)\right)$.
The subspace $\g_{-1}$ defining the contact distribution is split as
$\g_{-1}=\g_{-1}^L\oplus\g_{-1}^R$, where $\g_{-1}^L$ is represented by
$X_1\in\R^n$ and $\g_{-1}^R$ by $X_2\in\R^{n*}$, so that 
this splitting  is invariant under the adjoint action of $\g_0$.
Furthermore, the subspaces $\g_{-1}^L$ and $\g_{-1}^R$ are isotropic  with
respect to the bracket $[\ ,\  ]:\g_{-1}\x\g_{-1}\to\g_{-2}$, which reflects 
the geometric definition of the structure in terms of Levi bracket.
Similarly, $\g_1$ splits as $\g_1^L\oplus\g_1^R$.
The subalgebras $\p'\subset\p\subset\g$ are given as before.
Let $G$ be the group $SL(n+2,\R)$.
The connected parabolic subgroup $P\subset G$ corresponding to $\p\subset\g$ is
schematically indicated as 
$$
P=\left\{\pmat{pq&*&*\\0&*&*\\0&0&\frac pq}:p,q\in\R_+\right\}
$$
and $P'\subset P$ corresponds to $q=1$.

The homogeneous space $G/P$ is naturally identified with the set of 
flags of half--lines in hyperplanes in $\R^{n+2}$.
Indeed, the standard action of $G$ on $\R^{n+2}$ descends to a transitive
action both on rays and hyperplanes in $\R^{n+2}$, so $G$ acts transitively on
the set of flags of above type.
The subgroup $P$ is the stabilizer of the flag $\ell\subset\rho$ where $\ell$
and $\rho$ is the  ray and the hyperplane generated by the first and the first 
$n+1$ vectors from the standard basis, respectively.
Obviously, $P=\tilde P\cap \bar P$ where $\tilde P$ is the stabilizer of 
$\ell$ and $\bar P$ stabilizes $\rho$.
Note that both $\tilde P$ and $\bar P$ are also parabolic.

We claim that $G/P\cong\Proj^o(T^*S^{n+1})$ which is the oriented
projectivization of the cotangent bundle of projective sphere, the
oriented projectivization of $\R^{n+2}$.
This can be clarified as follows:
The projective sphere $S^{n+1}\cong\Proj^o(\R^{n+2})$ is identified with
$G/\tilde P$, where $\tilde P\subset G$ is the stabilizer of the ray $\ell$ 
as above.
Let $\tilde\p\subset\g$ be the Lie algebra of $\tilde P$ and let
$\g=\tg_{-1}\oplus\tg_0\oplus\tg_1$ be the corresponding grading of $\g$.
As usual, $(\g/\tp)^*\cong\tg_{-1}^*\cong\tg_1$, hence $T^*S^{n+1}\cong
T^*(G/\tilde P)$ is identified with $G\x_{\tilde P}\tg_1$ via the 
Maurer--Cartan form on $G$.
Now, the adjoint action of $\tilde P$ on $\tg_1$ is transitive and an easy
direct calculation shows that the stabilizer of a convenient element of 
$\tg_1$ is precisely $P'\subset P\subset\tilde P$;
the subgroup $P\subset\tilde P$ is the stabilizer of the corresponding ray.
Altogether, $\tg_1\cong\tilde P/P'$ and $\Proj^o(\tg_1)\cong\tilde P/P$,
so $T^*S^{n+1}\cong G/P'$ and $\Proj^o(T^*S^{n+1})\cong G/P$.
Hence the interpretation of the model cone for Lagrangean contact structures 
is
$$
\CE\cong T^*S^{n+1}\to\Proj^o(T^*S^{n+1})\cong\Ce
$$
so that the canonical $G$--invariant symplectic form on $\CE$ corresponds to
the canonical symplectic form on the cotangent bundle $T^*S^{n+1}$, cf. remark
\ref{2.3}(c).

Now we are going to expose a general construction following \cite{Tak}; 
it turns out this will be useful to find a candidate for the ambient connection $\Nabla$ on $\CE\cong T^*S^{n+1}$.
Let $M$ be a manifold with linear torsion--free connection $\nabla$ and 
let $H\subset TT^*M$ be the corresponding horizontal distributions on the 
cotangent bundle over $M$.
Together with the vertical subbundle $V$ of the projection $p:T^*M\to M$ we
have got an almost product structure on $T^*M$.
Let $\Al$ be the canonical one--form and $\OM=d\Al$ the canonical
symplectic form on $T^*M$.
By definition of $\OM$, the subbundle $V$ is isotropic with respect to $\OM$.
The complementary subbundle $H$ determined by the connection $\nabla$ is
isotropic if and only if $\nabla$ is torsion--free.
After the projectivization, the decomposition $V\oplus H=TT^*M$ yields a 
Lagrangean contact structure on $\Proj(T^*M)$. 
Moreover, the almost product structure on $T^*M$ and so the Lagrangean contact
structure on $\Proj(T^*M)$ are independent on the choice of connection from 
the projectively equivalent class $[\nabla]$.
Altogether, starting with a projective structure on a smooth manifold $M$, 
this gives rise to a Lagrangean contact structure on the projectivized
cotangent bundle of $M$.
Note that in terms of parabolic geometries, this construction is an instance 
of the so--called correspondence space construction \cite[section 4]{C2} which is
formally powered by the inclusion $P\subset\tilde P$ of parabolic subgroups in
$G$. 
As a particular implementation of a general principle, locally flat projective 
structure on $M$ gives rise  to a locally flat Lagrangean contact structure 
on $\Proj(T^*M)$.
This is actually observed elementarily in the previous paragraph provided we 
consider oriented projectivization instead of the usual one.

\begin{prop*}
Let $\CE\to\Ce$ be the model cone for $\g=\frak{sl}(n+2,\R)$. 
Then $\CE\cong T^*S^{n+1}$, $\Ce\cong\Proj^o(T^*S^{n+1})$, and $\OM$
corresponds to the canonical symplectic form on cotangent bundle.
Let further $J:TT^*S^{n+1}\to TT^*S^{n+1}$ be the almost product structure 
given by the projective structure on $S^{n+1}$ as above.
Then the bilinear form $\hat g:=\OM(-,J-)$ on $T^*S^{n+1}$ is symmetric and 
non--degenerate and the ambient  symplectic connection $\Nabla$ from Theorem 
\ref{th3} is the Levi--Civita connection of $\hat g$.
\end{prop*}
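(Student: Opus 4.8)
The plan is to handle in turn the three assertions in the statement. The identifications $\CE\cong T^*S^{n+1}$, $\Ce\cong\Proj^o(T^*S^{n+1})$ and ``$\OM$ is the canonical cotangent symplectic form'' are already assembled in the paragraphs preceding the statement, so they need only be recorded. For ``$\hat g$ is symmetric and non--degenerate'' I would argue purely algebraically. By the construction recalled above, $J$ has as its $(\pm1)$--eigenbundles the horizontal bundle $H$ and the vertical bundle $V$ of $p\colon T^*S^{n+1}\to S^{n+1}$, and both are Lagrangean for $\OM$ -- the vertical one tautologically, the horizontal one because the chosen representative $\nabla$ of the flat projective structure on $S^{n+1}$ is torsion--free. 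Writing $u=u_H+u_V$, $v=v_H+v_V$ and using this, one gets $\hat g(u,v)=\OM(u,Jv)=\OM(u_V,v_H)+\OM(v_V,u_H)$, manifestly symmetric in $u,v$ and, since $J$ and $\OM$ are non--degenerate, non--degenerate; its signature is the neutral one $(n+1,n+1)$. In the frame adapted to $\nabla$ one moreover recognizes $\hat g$ as the Riemann extension of $\nabla$, which is what makes ``the Levi--Civita connection of $\hat g$'' a concrete object.

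For the identification of $\Nabla$ I must show that $\Nabla$, the Levi--Civita connection of $\hat g$, is symplectic and that for every section $s\colon\Ce\to\CE$ the pull--back connection $\nabla^s$ and the exact Weyl connection $\bar\nabla^s$ induce the same partial contact connection on $D$. That $\Nabla$ is symplectic I would deduce from $\Nabla\hat g=0$: since $\hat g=\OM(-,J-)$ and $J^2=\id$ also $\OM=\hat g(-,J-)$, whence $\Nabla\OM=\hat g(-,(\Nabla J)-)$, so $\Nabla\OM=0$ amounts to $\Nabla J=0$, i.e.\ to $\Nabla$ preserving the splitting $H\oplus V$. Here I would exploit the local flatness of the projective structure on $S^{n+1}$ and carry out the verification over an affine chart $\R^{n+1}\subset S^{n+1}$ on which $\nabla$ is flat: there $H$ and $V$ are both integrable, $\hat g$ becomes the constant neutral form $\sum_i dx^i\odot dp_i$ in the induced coordinates on $T^*\R^{n+1}\subset\CE$, and $\Nabla$ is simply the flat connection, which is torsion--free, preserves $\OM$, and preserves $H$ and $V$. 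Since the homogeneous model and all the relevant objects are real--analytic -- and since the construction of Theorem \ref{th1} is anyway of a local nature -- this is enough.

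For the coincidence of the induced partial contact connections I would imitate the CR case treated in \ref{4.2}. On the model, the partial contact connection induced by $\th_s$ from $\bar\nabla^s$ is the unique partial contact connection on $D=L\oplus R$ which (i) preserves $L$ and $R$ (because $\bar\nabla^s$ preserves the underlying Lagrangean contact structure), (ii) leaves $d\th_s|_D$ parallel (because $\bar\nabla^s$ preserves $\th_s$, hence $d\th_s$), and (iii) has vanishing contact torsion (the model being flat). Uniqueness is a difference--tensor argument: (iii) makes the difference tensor $A$ symmetric; (i) together with symmetry forces $A$ to split into a symmetric $L$--valued bilinear form on $L$ and a symmetric $R$--valued bilinear form on $R$; and (ii), since $d\th_s$ pairs $L$ non--degenerately with $R$, then kills both. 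It remains to verify (i)--(iii) for the partial contact connection induced by $\th_s$ from $\nabla^s$. Property (iii) is immediate since $\Nabla$, hence $\nabla^s$, is torsion--free. Property (i) follows from $\Nabla$ preserving $H$ and $V$, together with the facts that $Tp$ carries $V$ and $H$ onto $L$ and $R$ modulo the Euler direction $\span\E$, and that the horizontal lift entering the definition of $\nabla^s$ respects these splittings. Property (ii) is the computation of \ref{4.2} copied with $J$ in place of the complex structure: using $d\th_s=s^*\OM$, $Ts\cdot X=X^{hor}\o s$ and $Ts\cdot\nabla^s_XY=\Nabla_{X^{hor}}Y^{hor}\o s\bmod\span\E$ -- the ambiguity being harmless because $\OM(\E,-)=2\Al$ annihilates horizontal lifts of sections of $D$ -- one rewrites $(\nabla^s_X(d\th_s|_D))(Y,Z)$ as $(\Nabla_{X^{hor}}\OM)(Y^{hor},Z^{hor})$, which vanishes by $\Nabla\OM=0$.

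The step I expect to be the main obstacle is precisely the reconciliation of ``$\Nabla$ is the Levi--Civita connection of $\hat g$'' with ``$\Nabla$ is symplectic'': the projective structure on $S^{n+1}$ has no global flat representative, so $\hat g$ need not be globally para--K\"ahler, and one really has to use the local flatness, compute over an affine chart, and invoke analyticity; the same local reduction is what makes the matching of the splitting $H\oplus V$ on $\CE$ with $L\oplus R$ on $\Ce$ across the cone $p$ manageable, including the bookkeeping of the Euler factor. Everything else -- the cone interpretation, the identification of $\OM$, and the vanishing of the contact torsion of $\bar\nabla^s$ on the flat model -- has already been established.
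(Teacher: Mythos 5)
Your proposal is correct in substance and follows the paper's strategy: the first half (the cone identifications, and symmetry plus non--degeneracy of $\hat g$ from the isotropy of $V$ and $H$ for $\OM$) is exactly the paper's argument, and for the second half the paper writes only that the rest is ``completely parallel'' to the CR case of \ref{4.2}, so the content of your attempt is how you reconstruct that parallel. Two differences are worth recording. First, your uniqueness criterion for the partial contact connection --- vanishing contact torsion, preservation of the splitting $D=L\oplus R$, and preservation of $d\th_s|_D$ --- is a valid variant of, but not the same as, the criterion the paper uses in \ref{4.2}, namely vanishing contact torsion plus preservation of the single non--degenerate symmetric form $g=d\th_s(-,J-)$ on $D$. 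The paper's version is cheaper to verify for the pull--back connection: it needs only the one computation $(\nabla^s_Xg)(Y,Z)=(\Nabla_{X^{hor}}\hat g)(Y^{hor},Z^{hor})=0$. Your condition that the induced partial contact connection preserves $R$ is the thinnest step of your write--up: $Tp^{-1}(L)=V$ holds on the nose, so the $L$--part is immediate, but $Tp^{-1}(R)$ is \emph{not} contained in $H$ (it is $(H\cap Tp^{-1}(D))\oplus\span{\E}$), so ``the horizontal lift respects these splittings'' hides genuine bookkeeping with the Euler and Reeb directions that you only gesture at; switching to the paper's criterion removes the need for it. Second, you explicitly raise the point on which the paper is silent: that ``Levi--Civita connection of $\hat g$'' is symplectic only because $\Nabla J=0$, which holds because the relevant representatives of the projective class are (locally) flat. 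Your verification over an affine chart is the right move, but the appeal to analyticity is a red herring --- $\Nabla\OM$ is a tensor, so a chartwise check suffices --- and the real delicacy, which you correctly flag but which is inherited from the paper's own looseness, is that the horizontal distribution $H$ on $T^*S^{n+1}$, hence $J$ and $\hat g$, does depend on the choice of representative connection (only the induced structure on $\Proj^o(T^*S^{n+1})$ is projectively invariant), so the chartwise flat models must be reconciled.
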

\begin{proof}
Let $S^{n+1}\subset\R^{n+2}$ be the standard projective sphere.
The projective structure $[\nabla]$ is induced from the canonical flat
connection in $\R^{n+2}$, in particular, any connection in the class is
torsion--free. 
As before, this ensures that both subbundles $V$ and $H$ from the
corresponding decomposition of $TT^*S^{n+1}$ are isotropic with respect to the
canonical symplectic form $\OM$.
The decomposition $V\oplus H=TT^*S^{n+1}$ determines the product structure $J$ 
so that $V$ and $H$ is the eigenspace of $J$ corresponding to the eigenvalue 
$1$ and $-1$, respectively.
Since both $\OM$ and $J$ are non--degenerate, the same holds true also for 
$\hat g:=\OM(-,J-)$. 
Since both $V$ and $H$ are isotropic with respect to $\OM$, the bilinear form $\hat g$ 
turns out to be symmetric, hence it is a pseudo--metric on $T^*S^{n+1}$.

The rest of the proof is completely parallel to that in \ref{4.2} up to the
interchange between the almost complex and almost product structure on $\CE$
and $D\subset T\Ce$, respectively.
\end{proof}



\begin{thebibliography}{XX}

\bibitem{B} R. Bryant, Bochner--K\"ahler metrics, 
J. Amer. Math. Soc.  14  (2001),  no. 3, 623--715

\bibitem{CGS} M. Cahen, S. Gutt, L. Schwachh\"ofer,
Construction of Ricci--type connections by reduction and induction,
in ``The breadth of symplectic and Poisson geometry'',  41--57, Progr. Math., 232, 
Birkhäuser Boston, 2005

\bibitem{CS} M. Cahen, L. Schwachh\"ofer, Special symplectic connections,
eprint arXiv:math/0402221v2

\bibitem{C2} A. \v Cap, Correspondence spaces and twistor spaces for
parabolic geometries, J. Reine Angew. Math. 582 (2005), 143--172

\bibitem{CG} A. \v Cap, A. R. Gover, Tractor Calculi for
Parabolic Geometries, Trans. Amer. Math. Soc. 354 (2002), 1511-1548

\bibitem{parabook} A. \v Cap, J. Slov\'ak, Parabolic geometries, to appear
in Mathematical Surveys and Monographs, AMS 2008

\bibitem{C-S} A. \v Cap, J. Slov\'ak, Weyl structures for parabolic
geometries, Math. Scand. 93 (2003), 53--90 

\bibitem{Chu} B. Y. Chu, Symplectic homogeneous spaces,
Trans. Amer. Math. Soc. 197 (1974), 145--159

\bibitem{Fox} D. J. Fox, Contact projective structures,
Indiana Univ. Math. J. 54 (2005), no. 6, 1547--1598

\bibitem{Jac} H. Jacobowitz, An introduction to CR structures, 
Mathematical Surveys and Monographs 32, AMS 1990

\bibitem{P-S} M. Pan\'ak, L. Schwachh\"ofer, Bochner--Kaehler metrics and 
connections of Ricci--type,  eprint arXiv:0710.0164  

\bibitem{P-T} S. V. Pikulin, E. A. Tevelev, Invariant linear connections on
homogeneous symplectic varieties,  Transform. Groups 6 (2001), no. 2,
193--198

\bibitem{Slo} J. Slov\'ak, Parabolic geometries, IGA preprint 97/11, Univ. of
Adelaide, 1997 

\bibitem{Tak} M. Takeuchi, Lagrangean contact structures on
projective cotangent bundles, Osaka J. Math. 31 (1994), 837--860

\end{thebibliography}
\end{document}